\DeclareFontFamily{OML}{script}{}
\DeclareFontShape{OML}{script}{m}{it}
{ <5-20> rsfs10 }{}
\DeclareMathAlphabet{\mathscript}{OML}{script}{m}{it}
\renewcommand{\mathcal}[1]{{\mathscript #1}\hspace{0.2ex}}
\newcommand{\red}{\color{red}}
\def\hlinewd#1{%
\noalign{\ifnum0=`}\fi\hrule \@height #1 \futurelet
\reserved@a\@xhline}
\newcommand{\re}[1]{\mbox{\rm$($\ref{#1}$)$}}
\newcommand{\dis}{\displaystyle}
\newcommand{\m}{\hspace{1em}}
\newcommand{\mm}{\hspace{2em}}
\newcommand{\p}{\partial}
\newcommand{\xx}{\vspace*{2ex}}
\renewcommand{\d}{\delta}
\renewcommand{\O}{\Omega}
\newcommand{\s}{\sigma}
\renewcommand{\epsilon}{\varepsilon}
\renewcommand{\t}{\widetilde}
\renewcommand{\theta}{\vartheta}
\renewcommand{\phi}{\varphi}
\newcommand{\text}{\mbox}
\newcommand{\operatorname}{\mathop}
\newcommand\be{\begin{equation}}
\newcommand\ee{\end{equation}}
\newcommand\bea{\begin{eqnarray}}
\newcommand\eea{\end{eqnarray}}
\newcommand\beaa{\begin{eqnarray*}}
\newcommand\eeaa{\end{eqnarray*}}
\newcommand{\dif}{\mathrm{d}}
\newenvironment{eqa}{\begin{equation}%
  \begin{array}{rcl}}{\end{array}\end{equation}}
\newcommand\beqa{\begin{eqa}}
\newcommand\eeqa{\end{eqa}}
\numberwithin{equation}{section}
\renewcommand{\tilde}{\widetilde}
\renewcommand{\hat}{\widehat}
\renewcommand{\bar}{\overline}
\newtheorem{thm}{Theorem}[section]
\newtheorem{lem}{Lemma}[section]
\newtheorem{rem}{Remark}[section]
\newcommand{\void}[1]{}
\newcommand{\bR}{{\mathbb R}}
\numberwithin{equation}{section}
\begin{document}\begin{CJK}{UTF8}{gkai}
\title[Linear stability  for a multi-layer tumor model with time delay]{The linear stability  for a free boundary problem modeling multi-layer tumor growth with time delay
}
\author{Wenhua He }
\author{Ruixiang Xing }
\author{Bei Hu   }
\address{School of Mathematics, Sun Yat-sen University, Guangzhou 510275, China
}
\address{School of Mathematics, Sun Yat-sen University, Guangzhou 510275, China
}
\address{Department of Applied and Computational Mathematics and Statistics, University of Notre Dame, Notre Dame, Indiana 46556, USA}
\email{hewh27@mail2.sysu.edu.cn}
\email{xingrx@mail.sysu.edu.cn }
\email{b1hu@nd.edu
}

\maketitle

\begin{abstract}
We study a free boundary problem modeling multi-layer tumor growth with a small time delay $\tau$, representing the time needed for the cell to complete the replication process.  The model  consists of  two elliptic equations which describe  the concentration of nutrient and the tumor tissue pressure, respectively, an ordinary differential equation describing the cell location characterizing the  time delay and a partial differential equation for the free boundary. In this paper we establish the well-posedness of the problem, namely,  first we prove that there exists a unique  flat stationary solution  $(\sigma_*, p_*, \rho_*, \xi_* )$   for all $\mu>0$. The stability of this stationary solution should depend on the tumor aggressiveness constant $\mu$. It is also unrealistic to expect the perturbation to be flat. We show  that, under non-flat  perturbations,
there  exists a threshold  $\mu_*>0$ such that  $(\sigma_*, p_*, \rho_*, \xi_*)$  is linearly stable if $\mu<\mu_*$ and linearly unstable if  $\mu>\mu_*$. Furthermore, the time delay increases the
stationary tumor size. These are interesting results with  mathematical and biological implications.

 \xx\noindent
{\bf Keywords.}
Free boundary problem; Tumor model; Stability; Time-delay

\xx\noindent
{\bf 2010 mathematics subject classifications.} 35R35, 35K57, 35B40, 92B05

\end{abstract}

\section{Introduction}
There is a variety of shapes of tumors in tissue cultures. It is known that three-dimensional tumors  grown in tissue culture are  likely to take the shape of spheroids;  a large number of partial differential equation (PDE)  sphere-shaped tumors models have been developed, and a variety of properties including
well-posedness, asymptotic stability, bifurcation, the impact of a variety of biological   relevant parameters, etc., are studied. For example,  the first model of free boundary problem  for a solid tumor growth  is proposed and analyzed by  Greenspan  in \cite{1972Models}  and \cite{1976On}.   In
\cite{FR1},  Friedman and Reitich considered global well-posedness and global asymptotically stability for radially symmetric solutions.
For the non-symmetric case,  Bazaliy
and Friedman established the local well-posedness and asymptotic behavior under non-radial perturbations for the time-dependent problem in \cite{BVF} and \cite{BVF1}.  In particular, Friedman and Hu extended the work by  giving a precise threshold in \cite{FH4}.
 For more details,
 we refer to the papers \cite{ FH3, FH5,  Hongjing} and the references therein.


Medico-biologists have recently developed that cellular aggregates  gather on permeable membranes,  causing them to form multilayered tumor cell.
Because multilayered tumor cells are grown on permeable membranes which can separate two reservoirs of the diffusion apparatus directly, it   is an important task to study the fluidity  of drug and metabolism of tumor tissue.
See  \cite{2020Bifurcation,1997Three,2004Three,kyle1999characterization} for the study of multilayered tumor cells.

 Following the works of Cui and  Escher \cite{CE1} and Zhou,  Escher and Cui \cite{2008Bifurcation}, we consider in this paper the following   3-dimensional multilayered tumor   region of the  flat-shaped  form
$$
\Omega(t) \triangleq \{(x,y)\in \bR^{2}\times \bR; \;\;  0<y< \rho(t,x)\}, \mm {\bf x} = (x,y)= (x_1,x_2, y),
$$
where $\rho(t,x)$ is an unknown positive function. Denote by $\Gamma(t)$ the upper boundary
$\{y= \rho(t,x)\}$ of $\Omega(t)$ (the free boundary).
\begin{figure}[H] \label{bj}
\includegraphics[width=3.5in]{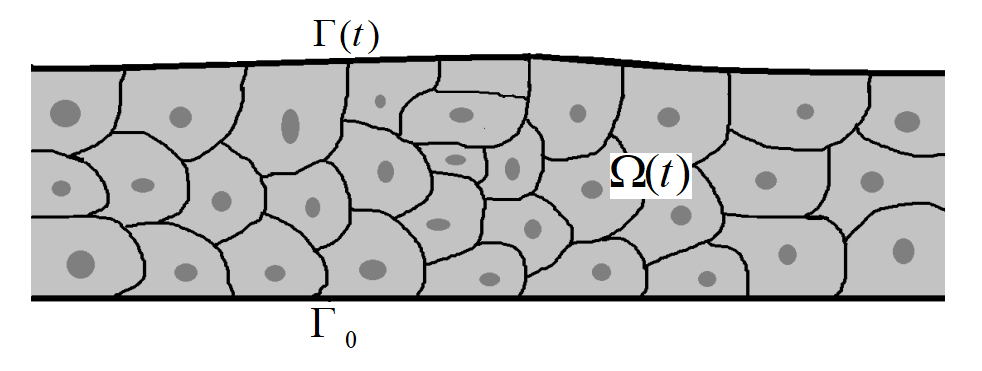}
\caption{}
\end{figure}
Through the upper boundary $\Gamma(t)$, a multi-layer tumor acquires
nutrients (denoted by $\sigma$), mostly oxygen or glucose, enabling tumor cells to grow and proliferate. The nutrient $\sigma$ satisfies the diffusion equation
$\lambda\sigma_t - \Delta\sigma + \sigma = 0$, where
 $\lambda$ is the ratio  of the rate for nutrients diffusion to the rate for the cell proliferation, so it is  small, and in this paper we assume a quasi-steady state approximation by taking $\lambda = 0$.

For simplicity, we assume that the tumor is immersed in an environment with nutrient concentration $\bar\sigma$. Let
$\Gamma_0$ denote the lower boundary $\{y=0\}$, which is assumed to be an impermeable layer, i.e.,
there is no nutrient flux through $\Gamma_0$:
\begin{eqnarray}
&&  - \Delta\sigma + \sigma = 0, \hspace{2em} (x_{1}, x_{2}, y)\in\Omega(t),\hspace{2em} t>0,\label{1.1}\\
&&\sigma  = \bar{\sigma}, \hspace{2em} (x_{1}, x_{2},y)\in \Gamma(t),\hspace{2em} t>0,\label{1.2}\\
&&\displaystyle\frac{\partial \sigma}{\partial y} =0, \hspace{2em} (x_{1}, x_{2},y)\in \Gamma_0, \hspace{2em} t>0. \label{1.3}
\end{eqnarray}

If the tumor is assumed to be of porous medium type where Darcy's law (i.e., $\vec{V}=-\nabla p$, where $p$ is the pressure, here we consider extracellular matrix as ``porous medium" in which cell moves) can be used, then the conversation of mass $\mbox{div} \vec V = S$ (where $S$ is the proliferation rate) implies
\[ -\Delta p = S. \]
The proliferation rate $S$ is proportional to $\sigma - \tilde \sigma$, where $\tilde \sigma$
is the threshold concentration that is needed by the tissue to maintain itself. Since the cells
need time (say $\tau$) to replicate and proliferate, it is assumed that  $S=  \mu[\sigma(\xi(t-\tau;{\bf x},t),t-\tau)-\tilde{\sigma}]$,  where $\mu$
is the tumor  aggressiveness constant and  $\xi(s; {\bf x},  t)$ tracks the
cell location at time $s$ which reaches the location ${\bf x}
=(x_1,x_2, y)$ at time $t$, and moves with the velocity field $\vec V = -\nabla p$:
\begin{eqnarray}  && \displaystyle\frac{\dif \xi(s;{\bf x}, t)}{\dif s}= -\nabla p( { \xi(s;{\bf x}, t),s)},\hspace{2em} t-\tau\le s\le t,\label{1.4}\\
&&\xi(s; x_{1}, x_{2},   y, t) = (x_{1}, x_{2}, y),\hspace{2em} s = t.\label{1.5}
\end{eqnarray}
Combining the expression of $S$ and the Darcy's law, we derive
\begin{equation}\label{1.5b}
-\Delta p =\mu[\sigma(\xi(t-\tau; x_{1}, x_{2},  y, t),t-\tau)-\tilde{\sigma}],\hspace{2em} (x_{1}, x_{2}, y)\in\Omega(t),\hspace{2em} t>0,
\end{equation}
and assuming the velocity field is continuous up to the boundary, the normal velocity of the moving boundary $\Gamma(t)$ is
\begin{equation}\label{1.5a}
V_n = -\nabla p\cdot n = -\frac{\partial p}{\partial n},  \hspace{2em} (x_{1}, x_{2},y)\in \Gamma(t),\hspace{2em} t>0.
\end{equation}

 Because most of the proteins and lipids that make up the cell membrane are held together
with the cell-to-cell adhesiveness,
we   have the boundary condition, see \cite{BC2},
\begin{eqnarray}\label{1.6}
&& p = \kappa , \hspace{2em} (x_{1}, x_{2},y)\in \Gamma(t),\hspace{2em} t>0,
\end{eqnarray}
where $\kappa$ is the mean curvature.  And
\begin{eqnarray}\label{1.6a}
&& \displaystyle\frac{\partial p}{\partial y} =0, \hspace{2em} (x_{1}, x_{2},y)\in \Gamma_0, \hspace{2em} t>0. \label{1.9a}
\end{eqnarray}
For convenience of our discussion, we shall also impose the $2\pi$-periodic condition in the $x_1$ and $x_2$ directions.

We finally prescribe initial conditions.  For simplicity
we assume initial  data are time independent on the interval $[-\tau,0]$:
\begin{eqnarray}\label{1.8}
&& \Omega(t)  = \Omega_0, \quad -\tau\le t \le 0, \\
\label{1.9}
&&  p (x_{1}, x_{2},y,t) = p_0 (x_{1}, x_{2},y), \quad {  (x_{1}, x_{2},y)\in \Omega_0,}\quad -\tau\le t \le 0,
\end{eqnarray}
where we assume the compatibility condition $\frac{\p p_0}{\p n} =0$ on $\p\O_0$.  The $p$ and $\xi$ are interdependent on the interval $[t-\tau, t]$;  the value of $\xi$ at $0$, for example, depends on the value of $p$ at $[-\tau, 0]$.  Once   the initial data
for $p$ is available on { $[-\tau,0]$,} we can solve $\xi$. So we only assume initial data for $p_0$.

The idea of adding time delay on the tumor model was initiated by Byrne \cite{delay2}, and recently, the radially symmetric version has drawn considerable attention of other researchers, see \cite{delay1, delay3, delay4, delay5, delay6}. The time delay represents the time taken for cells to undergo replication (approximately 24 hours). The non-radially symmetric model  was established by Zhao and Hu \cite{zhao, zhao2}, a radially symmetric stationary solution was found,  stability with respect to non-radially symmetric perturbation was studied, and bifurcation branches were established.  In this paper we shall extend the linear stability results to the
flat domains with non-flat perturbations. We begin with the existence and uniqueness of the stationary solution.  In contrast to the results in \cite{zhao}, our domain is different, resulting various distinct estimates need in order to carry out the proofs. {\em The stationary solution $(\sigma_*, p_*, \rho_*, \xi_*)$ is said to be {\em flat}  if $\sigma_*, p_*, \rho_*$ are independent of the variables $x_1, x_2$ and $\xi_*(s; x_{1}, x_{2},y) = (x_1, x_2,   \xi_{30}(s_*;y))$; roughly speaking, here $s_*$ represents the limit of the variable $s-t$ as $t\to \infty$ and therefore $-\tau\le s_* \le 0$: this is the amount of time needed to replace the dead cells by the same amount of new born cells   to make the tumor stationary.
}

\begin{thm}\label{T1.1}
For  all $\mu>0$, there exists a unique flat stationary  solution  $(\sigma_*, p_*, \rho_*, \xi_*)$ to the\textbf{} problem \re{1.1}-\re{1.9} for sufficiently small $\tau$.
\end{thm}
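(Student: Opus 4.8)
The plan is to reduce the flat stationary version of \re{1.1}--\re{1.9} to a single scalar equation for the tumor height $\rho_*$, solve that equation explicitly when $\tau=0$, and then propagate the solution to small $\tau$ by a perturbation argument. For a flat stationary solution $\rho_*>0$ is a constant, $\O_*=\{0<y<\rho_*\}$, and $\sigma_*,p_*$ are functions of $y$ only. Since the plane $\{y=\rho_*\}$ has zero mean curvature, the condition $p=\kappa$ on the free boundary forces $p_*(\rho_*)=0$, and since $V_n\equiv 0$ in a stationary state the kinematic condition becomes $p_*'(\rho_*)=0$. The stationary system is therefore
\[
-\sigma_*''+\sigma_*=0,\quad \sigma_*'(0)=0,\quad \sigma_*(\rho_*)=\bar\sigma\qquad\text{on }(0,\rho_*),
\]
\[
-p_*''=\mu\big[\sigma_*(\xi_{30}(-\tau;y))-\tilde\sigma\big],\quad p_*'(0)=p_*'(\rho_*)=0,\quad p_*(\rho_*)=0\qquad\text{on }(0,\rho_*),
\]
coupled with the characteristic equation $\frac{\dif}{\dif s_*}\xi_{30}(s_*;y)=-p_*'(\xi_{30}(s_*;y))$, $\xi_{30}(0;y)=y$, read off at $s_*=-\tau$. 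The nutrient problem is solved at once by $\sigma_*(y)=\bar\sigma\cosh y/\cosh\rho_*$.

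Next I would decouple $p_*$ from $\xi_{30}$. Fix $\rho>0$ and write $\sigma_*=\sigma_*[\rho]$ (extended in the obvious way past $y=\rho$). Given a candidate $p\in C^{1}[0,\rho]$, solve the characteristic ODE to obtain $\xi_{30}[p]$, and let $\widetilde p$ be the unique solution of $-\widetilde p''=\mu[\sigma_*(\xi_{30}[p](-\tau;\cdot))-\tilde\sigma]$, $\widetilde p'(0)=0$, $\widetilde p(\rho)=0$. Because $|\xi_{30}[p](-\tau;y)-y|\le\tau\|p'\|_{\infty}$ and $p\mapsto\xi_{30}[p]$ is Lipschitz with an $O(\tau)$ constant, $p\mapsto\widetilde p$ is a contraction on a suitable ball once $\tau$ is small, uniformly for $\rho$ in compact subsets of $(0,\infty)$; its fixed point $p=p[\rho,\tau]$ is $C^{1}$ in $(\rho,\tau)$. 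Integrating the pressure equation once with $p_*'(0)=0$, the remaining boundary condition $p_*'(\rho_*)=0$ is equivalent to
\[
\Phi(\rho,\tau):=\int_0^{\rho}\big[\sigma_*[\rho]\big(\xi_{30}[p[\rho,\tau]](-\tau;z)\big)-\tilde\sigma\big]\,\dif z=0 ,
\]
so a flat stationary solution corresponds exactly to a positive root of $\Phi(\cdot,\tau)$, and $\Phi\in C^{1}$ near such roots.

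At $\tau=0$ one has $\xi_{30}(0;z)=z$, hence
\[
\Phi(\rho,0)=\int_0^{\rho}\Big[\frac{\bar\sigma\cosh z}{\cosh\rho}-\tilde\sigma\Big]\dif z=\bar\sigma\tanh\rho-\tilde\sigma\rho=\rho\Big(\bar\sigma\,\frac{\tanh\rho}{\rho}-\tilde\sigma\Big).
\]
Under the natural assumption $0<\tilde\sigma<\bar\sigma$, the map $\rho\mapsto\tanh\rho/\rho$ decreases strictly from $1$ to $0$ (since $(\tanh\rho/\rho)'=(2\rho-\sinh 2\rho)/(2\rho^{2}\cosh^{2}\rho)<0$), so $\Phi(\cdot,0)$ has a unique positive zero $\rho_0$, at which $\partial_\rho\Phi(\rho_0,0)=\rho_0\bar\sigma\,(\tanh\rho/\rho)'|_{\rho_0}<0$. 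By the implicit function theorem there are $\tau_0>0$ and a $C^{1}$ curve $\rho_*(\tau)$ with $\rho_*(0)=\rho_0$ and $\Phi(\rho_*(\tau),\tau)=0$ for $0\le\tau<\tau_0$; retracing the reduction recovers $\sigma_*,p_*,\xi_*$, which gives existence. (A sign computation $\partial_\tau\Phi(\rho_0,0)=\int_0^{\rho_0}\sigma_*'[\rho_0]\,p_*'\,\dif z>0$, using that $p_*'>0$ on $(0,\rho_0)$, even yields $\rho_*'(0)>0$, i.e. the delay enlarges the stationary tumor, matching the abstract.)

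What remains, and what I expect to be the main obstacle, is \emph{global} uniqueness: ruling out positive roots of $\Phi(\cdot,\tau)$ far from $\rho_0$. The crux is a $\tau$-uniform a priori bound $\rho_0\le\rho\le c_2<\infty$ for every positive root $\rho$. The lower bound is easy: $y\mapsto\xi_{30}(-\tau;y)$ and $\sigma_*[\rho]$ are both increasing, so the integrand in $\Phi$ is increasing with mean zero, which forces $p_*'\ge 0$ on $(0,\rho)$, hence $\xi_{30}(-\tau;z)\ge z$, hence $0=\Phi(\rho,\tau)\ge\Phi(\rho,0)=\bar\sigma\tanh\rho-\tilde\sigma\rho$, i.e. $\rho\ge\rho_0$. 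The upper bound is the delicate point: one must show $\|p_*'\|_{\infty}$ is bounded independently of $\rho$ (and of $\tau$ small), which holds because the source $\sigma_*[\rho](\xi_{30}(-\tau;\cdot))-\tilde\sigma$ is positive only on an interval of length $O(1)$ near $y=\rho$ --- owing to the exponential smallness of $\sigma_*[\rho]$ away from the top --- so that $\Phi(\rho,\tau)-\Phi(\rho,0)=O(\tau)$ uniformly in $\rho$; combined with $\Phi(\rho,0)\le\bar\sigma-\tilde\sigma\rho$ this gives $\tilde\sigma\rho\le\bar\sigma+O(\tau)$, hence $\rho\le c_2$. On the fixed compact interval $[\rho_0,c_2]$ one then has $\Phi(\rho,\tau)=\Phi(\rho,0)+O(\tau)$ uniformly, and $\Phi(\cdot,0)$ vanishes only at $\rho_0$ and is bounded away from $0$ off any neighborhood of it, so for $\tau$ small the unique root of $\Phi(\cdot,\tau)$ is $\rho_*(\tau)$. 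Keeping every constant uniform in $\tau$ while $p_*'$ --- and hence the displacement driving $\xi_{30}$ --- itself depends on $\rho$ is where the real work lies; the rest is the routine ODE and contraction bookkeeping above.
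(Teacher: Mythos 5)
Your proposal is correct in substance and shares the paper's skeleton: solve $\sigma_*$ explicitly, treat the $(p,\xi_{30})$ subsystem at fixed $\rho$ by a contraction whose Lipschitz constant is $O(\tau)$, and reduce everything to one scalar equation in $\rho$ whose $\tau=0$ form is $\bar\sigma\tanh\rho-\tilde\sigma\rho=0$, solved uniquely thanks to the monotonicity of $\tanh\rho/\rho$. Where you diverge is the finishing step. The paper fixes the bracket $[\frac12\rho_S,\frac32\rho_S]$ in advance, shows $\partial_\rho F(\rho,\tau)=\partial_\rho F(\rho,0)+O(\tau)<0$ there together with sign changes at the endpoints, and declares uniqueness \emph{within that bracket}; it never rules out roots outside it. You instead invoke the implicit function theorem at the nondegenerate zero $\rho_0=\rho_S$ (equivalent in content to the paper's monotonicity argument, and your computation $\partial_\tau\Phi(\rho_0,0)=\int_0^{\rho_0}\sigma_*'p_*'>0$ correctly anticipates the paper's Theorem 5.1 that delay enlarges the tumor), and then attempt genuinely \emph{global} uniqueness via $\tau$-uniform a priori bounds $\rho_0\le\rho\le c_2$ on any root. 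That is more than the paper proves, and your lower-bound argument ($p_*'\ge0$, hence $\xi_{30}(-\tau;z)\ge z$, hence $\Phi(\rho,\tau)\ge\Phi(\rho,0)$) is clean; but the upper bound, and the fact that $\Phi(\cdot,\tau)$ is even well defined for all $\rho>0$ with a $\tau$-threshold independent of $\rho$ (the naive bound $\|p_*'\|_\infty\lesssim\mu\bar\sigma\rho$ grows with $\rho$, so the contraction constant is not automatically uniform), remain sketches, as you acknowledge. Two bookkeeping points worth fixing: since $p_*'\ge0$, backward characteristics move upward and may leave $[0,\rho]$, so you must extend $p$ (not only $\sigma_*$) past $y=\rho$ before solving the characteristic ODE --- this is exactly the paper's linear extension to $[0,2]$ after rescaling to the unit interval; and once you localize roots to a compact interval around $\rho_0$, uniqueness of the stationary solution still requires the fixed-point uniqueness of $p[\rho,\tau]$ there, which your uniform-on-compacts contraction supplies. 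If you only aim for the theorem as the paper proves it (uniqueness in a fixed bracket around $\rho_S$), your argument closes without the delicate global step.
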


In order to obtain the linear stability results, we first  linearize the  system at the flat stationary solution  $(\sigma_*, p_*, \rho_*, \xi_*)$.

Assume the initial conditions are perturbed
from the stationary solution:
\begin{eqnarray}
&&\partial \Omega(t): y = \rho_* + \epsilon\rho_0(x_{1}, x_{2}),\hspace{2em} -\tau\le t\le0,\nonumber\\
&&  p(x_{1}, x_{2},y,t) = p_*(y) + \epsilon q_0(x_{1}, x_{2},y), \mm -\tau\le t\le 0.
\end{eqnarray}

Substituting
  \begin{eqnarray}
    & &\partial \Omega(t): y = \rho_* + \epsilon \rho(x_{1}, x_{2},t)+O(\epsilon^2),\nonumber\\
    &&\sigma(x_{1}, x_{2},y, t) = \sigma_*(y) + \epsilon w(x_{1}, x_{2},y,t)+O(\epsilon^2), \nonumber\\
    &&p(x_{1}, x_{2},y,t)= p_*(y) + \epsilon q(x_{1}, x_{2},y,t)+O(\epsilon^2), \nonumber\\
   & &{   \xi(s;x_{1}, x_{2},y,t)= \xi_*(s-t; x_{1}, x_{2},y)+\epsilon(\xi_{11}, \xi_{21}, \xi_{31})+O(\epsilon^2)}\nonumber
\end{eqnarray}
into \re{1.1}-\re{1.9} and collecting the $\epsilon$-order terms, we get the  linearized system for   $(\partial \Omega, \sigma, p, \xi)$ at the flat stationary solution $(\sigma_*, p_*, \rho_*, \xi_*)$.
We define
\bea
    \mu_{j}(\rho_*^0)&=&\frac{  \displaystyle \frac12 j^{3/2} \tanh(\sqrt{j}\rho_*^0)}{ \bar\s \; k_1({j},\rho_*^0)} \hspace{2em}\text{ for } ~ {j}> j_0.\\
  k_1(j, \rho_*^0) & = & 1-\frac{\tanh \rho_*^0}{\rho_*^0}
 -\tanh \rho_*^0\cdot \Big[\sqrt{1+j}\tanh(\sqrt{1+j}\rho_*^0 )-
 \sqrt{j}\tanh(\sqrt{j}\rho_*^0)\Big] ,
\eea
where $\rho_*^0$ is the zeroth-order terms in $\tau$ of $\rho_*$ and $j_0$ is the unique zero of $k_1(\cdot,\rho_*^0)$.
Setting
\be
\mu_{j}(\rho_*^0) = +\infty \m\text{for } 0\le j\le j_0, \mm
\mu_*(\rho_*^0) =   \min_{j > j_0}\mu_{j}(\rho_*^0).
\ee
We now state the linear stability result of the flat stationary solution $(\sigma_*, p_*, \rho_*, \xi_* )$.
\begin{thm}\label{T1.2}
For sufficiently small $\tau$,
there exists
a threshold value $\mu_*(\rho_*^0)>0$ such that
the stationary solution $(\sigma_*, p_*, \rho_*,  \xi_*)$  is linearly stable if $\mu < \mu_*(\rho_*^0)$, i.e.,
there exist $C>0$ and $\delta >0$ such that for the problem linearized  in both
$\epsilon$-perturbation terms and in time-delay $\tau$, respectively,
\begin{equation}
    |\rho(t)| \le Ce^{-\delta t} \text{ for all } t>0,
\end{equation}
the stationary solution $(\sigma_*, p_*, \rho_*,  \xi_*)$  is linearly unstable if $\mu > \mu_*(\rho_*^0)$.

\end{thm}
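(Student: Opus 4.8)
The plan is to exploit the $2\pi$-periodicity in $x_1,x_2$ to diagonalize the linearized system by Fourier series on $\mathbb T^2$: expand $\rho(t,x)=\sum_j\rho_j(t)Y_j(x)$, and similarly $w$, $q$, and the trajectory corrections $(\xi_{11},\xi_{21},\xi_{31})$, where $Y_j$ runs over the characters $e^{i(m x_1+n x_2)}$ and $j=m^2+n^2$ is the corresponding eigenvalue of $-\Delta_x$. For each $j$ the linearized nutrient equation becomes $-w_j''+(1+j)w_j=0$ on $(0,\rho_*)$ with $w_j'(0)=0$, so $w_j=A_j\cosh(\sqrt{1+j}\,y)$, and transferring the Dirichlet datum $\sigma=\bar\sigma$ to $y=\rho_*$ to first order determines $A_j$ through $\rho_j$ and $\sigma_*'(\rho_*)$. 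The linearized pressure equation takes the form $-q_j''+jq_j=(\text{source}_j)$ on $(0,\rho_*)$ with $q_j'(0)=0$, where $\text{source}_j$ is built from $\mu[\sigma_*'(\xi_{30})\xi_{31}+w]$ evaluated along the delayed trajectory; here I would use that on the delay interval $\xi_{30}=y+O(\tau)$ and $\xi_{31}=O(\tau)$, so at zeroth order in $\tau$ the source is simply $\mu w_j$, with explicit first-order-in-$\tau$ corrections. Solving this ODE (homogeneous part $\cosh(\sqrt j\,y)$, particular part from $w_j$) and imposing $p=\kappa$ at $y=\rho_*$, with $\kappa$ linearizing to a stabilizing term proportional to $j\rho_j$, expresses $q_j$ entirely through $\rho_j$; this is the step that generates the factors $\tanh(\sqrt j\,\rho_*^0)$, $\tanh(\sqrt{1+j}\,\rho_*^0)$ and $\tanh\rho_*^0/\rho_*^0$ appearing in $k_1$.

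\textbf{Scalar equation and threshold.} The kinematic condition $V_n=-\partial p/\partial n$, linearized and transferred to $y=\rho_*$, gives $\dot\rho_j(t)=-q_j'(\rho_*)+(\cdots)$; inserting the above and then Taylor-expanding the delayed quantities (so that only $\rho_j(t)$ and $\dot\rho_j(t)$ survive, via $\rho_j(t-\tau)\approx\rho_j(t)-\tau\dot\rho_j(t)$) reduces each block to
\[
\dot\rho_j(t)=\lambda_j(\mu,\tau)\,\rho_j(t),\qquad \lambda_j(\mu,\tau)=\lambda_j^0(\mu)+\tau\lambda_j^1(\mu)+O(\tau^2).
\]
A direct computation should identify $\lambda_j^0(\mu)$, up to a strictly positive factor, with $k_1(j,\rho_*^0)\big(\mu-\mu_j(\rho_*^0)\big)$: for $j>j_0$ one has $k_1(j,\rho_*^0)>0$, hence $\lambda_j^0(\mu)<0$ iff $\mu<\mu_j(\rho_*^0)$, whereas for $0\le j\le j_0$ one has $k_1(j,\rho_*^0)\le0$ and $\mu_j(\rho_*^0)\le0$, so $\lambda_j^0(\mu)<0$ for every $\mu>0$ — which is exactly what the convention $\mu_j=+\infty$ on that range encodes. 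For $\tau$ small, the sign of $\lambda_j(\mu,\tau)$ agrees with that of $\lambda_j^0(\mu)$ whenever the latter is bounded away from $0$, and this is the meaning of the perturbed thresholds $\mu_j(\rho_*^0)+O(\tau)$.

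\textbf{Conclusion.} If $\mu<\mu_*(\rho_*^0)=\min_{j>j_0}\mu_j(\rho_*^0)$, then $\mu<\mu_j(\rho_*^0)$ for all admissible $j$, so every $\lambda_j(\mu,\tau)<0$; to obtain the uniform rate I would invoke the large-$j$ asymptotics $k_1(j,\rho_*^0)\to1-\tanh\rho_*^0/\rho_*^0>0$ and $\mu_j(\rho_*^0)\sim c\,j^{3/2}\to\infty$, together with the growth of the positive prefactor in $\lambda_j^0$, to conclude $\sup_j\lambda_j(\mu,\tau)=-\delta<0$ (the supremum being a minimum over the finitely many modes with $j$ below a fixed bound, for $\tau$ small). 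Then $|\rho_j(t)|\le e^{-\delta t}|\rho_j(0)|$, and summing over $j$ gives $|\rho(t)|\le Ce^{-\delta t}$. Conversely, if $\mu>\mu_*(\rho_*^0)$, there is a mode $j^*>j_0$ with $\mu_{j^*}(\rho_*^0)\le\mu_*(\rho_*^0)<\mu$; then $\lambda_{j^*}(\mu,\tau)>0$ for $\tau$ small, so any perturbation with a nonzero $j^*$-component grows like $e^{\lambda_{j^*}t}$, and the flat stationary solution is linearly unstable.

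\textbf{Main obstacle.} The delicate part is the first two steps: carrying out the double linearization — in the perturbation $\epsilon$ and in the delay $\tau$ — consistently for the coupled trajectory ODE $\frac{\dif\xi}{\dif s}=-\nabla p(\xi,s)$ and the pressure equation, i.e., expanding $\xi_{30}$, $\xi_{31}$ and the delayed nutrient term, tracking every first-order-in-$\tau$ contribution to both the pressure source and the kinematic condition, and then distilling a single clean growth rate $\lambda_j(\mu,\tau)$ whose sign is tied to $\mu-\mu_j(\rho_*^0)+O(\tau)$ — the expressions for $\mu_j$, $k_1$ and $j_0$ in the statement being precisely the output of this reduction. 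A secondary technical point is the uniform-in-$j$ decay estimate, which rests on the asymptotic behavior of $\mu_j(\rho_*^0)$ and $k_1(j,\rho_*^0)$ as $j\to\infty$.
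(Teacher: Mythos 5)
Your overall strategy is the same as the paper's: Fourier decomposition in $(x_1,x_2)$, explicit resolution of the per-mode ODEs for $w$, $q$ via hyperbolic functions, reduction to a scalar growth rate whose sign is governed by $\mu\bar\sigma k_1(j,\rho_*^0)-\tfrac12 j^{3/2}\tanh(\sqrt j\rho_*^0)$, monotonicity of $k_1$ in $j$ to produce $j_0$ and $\mu_j$, the uniform decay via the asymptotics $h_{n,m}\sim-\tfrac12(n^2+m^2)^{3/2}$, and instability from the minimizing mode when $\mu>\mu_*$. So the skeleton is correct and faithful to the paper.

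However, there is a genuine gap at precisely the point you flag as the ``main obstacle,'' and it is not merely technical. You posit that the doubly-linearized dynamics reduces, mode by mode, to $\dot\rho_j=\lambda_j(\mu,\tau)\rho_j$ with $\lambda_j=\lambda_j^0+\tau\lambda_j^1+O(\tau^2)$, and you argue that for small $\tau$ the sign of $\lambda_j$ agrees with that of $\lambda_j^0$, the supremum over $j$ then being a minimum over finitely many low modes. This sign-stability claim is not uniform in $j$ without additional structure: the first-order-in-$\tau$ contribution per mode grows polynomially in $j$ (in the paper it is bounded only by $C(n^2+m^2+1)^{5/2}$), which outpaces the stabilizing term $-\tfrac12(n^2+m^2)^{3/2}$, so for any fixed $\tau$ there could a priori be high modes whose rate changes sign. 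The paper's resolution is structural: expanding $\rho_{n,m}=\rho_{n,m}^0+\tau\rho_{n,m}^1$, one gets the triangular system $\dot\rho_{n,m}^0=h_{n,m}\rho_{n,m}^0$ and $\dot\rho_{n,m}^1=h^1_{n,m}\rho_{n,m}^1+k^1_{n,m}\rho_{n,m}^0$, and the crucial computed identity is $h^1_{n,m}=h_{n,m}$ — the $O(\tau)$ terms do \emph{not} shift the homogeneous rate at all; they enter only as a forcing $k^1_{n,m}\rho^0_{n,m}$ with $|k^1_{n,m}|\le C(n^2+m^2+1)^{5/2}$. This yields $|\rho^1_{n,m}(t)|\le e^{-\delta(n^2+m^2+1)^{3/2}t}\bigl[|\rho^1_{n,m}(0)|+C|\rho^0_{n,m}(0)|(n^2+m^2+1)^{5/2}t\bigr]$, which is still exponentially decaying and summable over modes, giving $|\rho^0+\tau\rho^1|\le Ce^{-\delta t}$. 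Your proposal neither identifies this cancellation nor supplies a substitute control of the $\tau$-correction uniformly in $j$, so the uniform decay (and hence the stated stability for all $\mu<\mu_*$) is not established as written; the instability direction, by contrast, is fine since it only uses one fixed mode.
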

The structure of this article is as follows.  In section 2, we  collect some properties of hyperbolic function which will be useful later. We prove the existence and uniqueness of a flat stationary solution by using the contraction mapping principle in section 3. In section 4, we obtain the linearized system of  \re{1.1}-\re{1.9} and establish the linear stability results. We show the impact of time delay for tumor growth in section 5  and present mathematical and biological implications of our results in section 6.

\section{Preliminaries}
For convenience, we collect some elementary properties for special functions which are needed
 later on.

 The following are easy to verify:
\begin{eqnarray}
&&\label{2.1}
\frac{d}{d \rho}\frac{\tanh \rho}{\rho}=\frac{1}{ \rho}\Big(1-\frac{\tanh \rho}{\rho}-\tanh^{2}\rho\Big)=\displaystyle\frac{\rho-\sinh \rho\cosh \rho}{\rho^2\cosh^{2} \rho}<0,\hspace{2em} \rho>0,\\
&&\label{2.2}
\lim_{\rho\rightarrow0} \frac{\tanh \rho}{\rho}=1,\hspace{2em}\lim_{\rho\rightarrow+\infty} \frac{\tanh \rho}{\rho}=0,\\
&&
\label{2.3}
 \int e^{\sqrt{j}x} \cosh(\sqrt{1+j}x)  \dif x=\sqrt{1+j}e^{\sqrt{j}x}\sinh(\sqrt{1+j}x)
-\sqrt{j}e^{\sqrt{j}x}\cosh(\sqrt{1+j}x),
\\&&
\label{2.4}
 \int e^{-\sqrt{j}x} \cosh(\sqrt{1+j}x)  \dif x=\sqrt{1+j}e^{-\sqrt{j}x}\sinh(\sqrt{1+j}x)
+\sqrt{j}e^{-\sqrt{j}x}\cosh(\sqrt{1+j}x),\\ &&
\label{2.5}
  \int e^{-\sqrt{j}x}\sinh(\sqrt{1+j}x)  \dif x=\sqrt{1+j}e^{-\sqrt{j}x}\cosh(\sqrt{1+j}x)
+\sqrt{j}e^{-\sqrt{j}x}\sinh(\sqrt{1+j}x),
\end{eqnarray}
and
\begin{eqnarray}
  &&\label{2.6}\int_0^{\rho}\sinh^{2} y\dif y=\frac{1}{2}\sinh\rho\cosh\rho-\frac{1}{2}\rho,\\
  &&\label{2.7}
  \int_0^{\rho}y\sinh y \dif y=\rho\cosh\rho-\sinh\rho.
\end{eqnarray}

It is also derived in \cite[section 4]{CE1},
\begin{equation}\label{2.8}
    \frac{\partial^{2} }{\partial x^{2}}[\sqrt{x}\tanh (\sqrt{x}\rho)]
=\dis\frac{\rho}{2}\frac{\partial}{\partial x}\;\frac{\sinh(\sqrt{x}\rho)\cosh(\sqrt{x}\rho)+\sqrt{x}\rho}{(\rho\sqrt{x})\cosh^{2}(\sqrt{x}\rho)}<0,
\end{equation}
for $x>0$ and $\rho>0$.

\section{Flat Stationary Solution}
In this section, we prove  that there exists a unique  flat stationary solution $(\sigma_*, p_*, \rho_*,  \xi_*)$  of  the system  \re{1.1}-\re{1.9} for all $\mu>0$. Whenever there is no confusion, it is customary to  let $C$ to  denote various
 positive constants in our estimates, although it may change from one line to another.
Letting the $t$-derivatives to be zero in \re{1.1}--\re{1.9}, we   find that the  stationary problem  is of the form
\begin{eqnarray}
&&\label{3.1}\left \{
\begin{array}{lr}
-\sigma''(y) + \sigma(y) = 0, \hspace{2em} 0<y< \rho,\\
\sigma(\rho) = \overline{\sigma}, \mm
\displaystyle\frac{\partial \sigma}{\partial y}\Big|_{y=0}=0,
\end{array}
\right.\\
&&\label{3.2}\left \{
\begin{array}{lr}
-  p''(y) = \mu[\sigma (\xi_{30}(-\tau;y))-\tilde{\sigma}], \hspace{2em} 0<y< \rho, \\
p(\rho) = 0,\mm
\displaystyle\frac{\partial p}{\partial y}\Big|_{y=0}=0,
\end{array}
\right.\\
&&\label{3.3}\left \{
\begin{array}{lr}
\displaystyle
\frac{\dif  \xi_{30}}{\dif s_*}(s_*;y) = -\frac{\partial p}{\partial y} (\xi_{30}(s_*;y)), \hspace{2em} -\tau\le s_*\le 0,\\
 \xi_{30}(s_*;y) = y, \hspace{2em}\hspace{2em}\hspace{2em}\hspace{2em} s_* = 0,\\
\end{array}
\right.
\\
&&\label{3.4}\int_0^{\rho} \Big(\sigma( \xi_{30}(-\tau;y)) - \tilde{\sigma}\Big)\dif y = 0.
\end{eqnarray}
The equation \re{3.1} admits an explicit solution:
\begin{equation}\nonumber
\sigma_*(y) =\overline{ \sigma}\frac{\cosh y}{\cosh\rho}.
\end{equation}
We now proceed to establish the existence of a unique flat stationary  solution $(\sigma_*, p_*, \rho_*,  \xi_*)$ to the problem \re{1.1}-\re{1.9}.

{\bf \noindent Proof of Theorem \ref{T1.1}. }
Taking $\hat{y} = \displaystyle\frac{y}{\rho}$, $\hat{\sigma}(\hat{y})=\sigma(y)$, $\hat{p}(\hat{y})=\rho p(y)$ and $\hat{\xi_{30}}(s_*;\hat{y}) =\displaystyle \frac{\xi_{30}(s_*;y)}{\rho}$
into \re{3.1}--\re{3.4},
dropping the $``~\hat{}~"$ for { notational convenience}, we get
\begin{eqnarray}
&&\label{3.5}\left \{
\begin{array}{lr}
\sigma''(y) =\rho^2 \sigma(y) , \hspace{2em} 0<y<1,\\
\sigma(1) = \overline{\sigma},\mm
\displaystyle\frac{\partial \sigma}{\partial y}\Big|_{y=0}=0,
\end{array}
\right.\\
&&\label{3.6}\left \{
\begin{array}{lr}
- p''(y) = \mu\rho^3\Big[\sigma\Big(y+\displaystyle\frac{1}{\rho^3}\int_{-\tau}^0\displaystyle\frac{\partial p}{\partial y}(\xi_{30}(s_*;y)) \dif s \Big)-\tilde{\sigma}\Big], \hspace{2em} 0<y<1,\\
p(1) = 0,\mm
\displaystyle\frac{\partial p}{\partial y}\Big|_{y=0}=0,
\end{array}
\right.\\
&&\label{3.7}\left \{
\begin{array}{lr}
\displaystyle
\frac{\dif \xi_{30}}{\dif s_*}(s_*;y) = -\displaystyle\frac{1}{\rho^3}\frac{\partial p}{\partial y}(\xi_{30}(s_*;y)), \hspace{2em} -\tau\le s_*\le 0,\hspace{2em} 0<y<1,\\
\xi_{30}(s_*;y) = y, \hspace{2em}\hspace{2em}\hspace{2em}\hspace{2em} s_* = 0,\\
\end{array}
\right.
\\
&&\int_0^{1} \Big[\sigma\Big(y+\displaystyle\frac{1}{\rho^3}\int_{-\tau}^0\displaystyle\frac{\partial p}{\partial y}(\xi_{30}(s_*;y)) \dif s \Big) - \tilde{\sigma}\Big]\dif y = 0.\label{3.8}
\end{eqnarray}

Equation \re{3.5} is solved explicitly. For convenience, we also extend the solution outside $[0,1]$:
\begin{equation}\label{3.9}
\sigma_*(y;\rho) =\overline{ \sigma}\frac{\cosh(\rho y)}{\cosh\rho},
\m 0\le y \le 1, \mm \bar\sigma_*(y;\rho) = \bar \sigma, \m  1<y\le 2.
\end{equation}

Assume that $\rho_*$ exists and will be in the range of its maximum value $\rho_{\max}$ and minimum value $\rho_{\min}$ which will be  determined  later on.
By integrating the first equation of $\re{3.6}$, we have
\begin{equation}\label{q}
p(y) = \int_y^{1}\int_0^{\eta}\mu\rho_*^3\Big[\sigma_*\Big(z+\displaystyle\frac{1}{\rho_*^3}\int_{-\tau}^0\displaystyle\frac{\partial p}{\partial y}(\xi_{30}(s_*;z)) \dif s \Big)-\tilde{\sigma}\Big] \dif z  \dif \eta.
\end{equation}
Next we prove the existence and uniqueness of $p$ by using the contraction mapping principle. Obviously, $0$  is the lower solution of \re{3.7}. But there is no guarantee that the solution of \re{3.7} stays
below the upper boundary $\{y=1\}$. So we shall proceed  as in \cite{zhao}  to extend $p$ beyond $y=1$.  Let
$$X =  \{p\in W^{2,\infty}[0,2]; \|p\|_{W^{2,\infty}[0,2]}
\le 3\mu \rho^3_{\max} (\bar\sigma +\tilde{\sigma})\}.$$
For each $p\in X$, we first solve $\xi_{30}$ from the ODE \re{3.7},
and substitute it into \re{q} to define
 a mapping $T$:
\begin{equation}
    \label{T}
   Tp(y) = \displaystyle\int_y^{1}\int_0^{\eta}\mu\rho_*^3\Big[\sigma_*\Big(z+\displaystyle\frac{1}{\rho_*^3}\int_{-\tau}^0\displaystyle\frac{\partial p}{\partial y}(\xi_{30}(s_*;z)) \dif s \Big)-\tilde{\sigma}\Big] \dif z  \dif \eta, \mm 0\le y\le 1.
\end{equation}
Clearly, $Tp(1)=0, \displaystyle\frac{\partial( Tp)}{\partial y}\Big|_{y=0}=0$. We now extend $Tp$ to the interval $[0,2]$ by defining
\begin{equation}
    \label{extend}
    Tp(y) =
    \left\{
    \begin{split}
        &Tp(y),\hspace{2em} &0\leq y\le 1,\\
        & Tp'(1)(y-1),\hspace{2em} &1<y\le 2.
    \end{split}
    \right.
\end{equation}
It is clear with this extension,  $Tp$ is continuous with continuous derivative across $y=1$, and $Tp\in W^{2,\infty}[0,2]$.
\void{
obtain a unique solution $\t p_{k+1}\in X$ of the following system:
\begin{eqnarray}\ \label{3.10}
\left \{
\begin{array}{lr}
p_{k+1}(y) = \displaystyle\int_y^{1}\int_0^{y}\mu\rho_*^3[\sigma_*(z+\displaystyle\frac{1}{\rho_*^3}\int_{-\tau}^0\displaystyle\frac{\partial p_{k}}{\partial y}(\xi_{k}(s;z,0)) \dif s )-\tilde{\sigma}] \dif z  \dif y,\\
p_{k+1}(1) = 0,\\
\displaystyle\frac{\partial p_{k+1}}{\partial y}\Big|_{y=0}=0.
\end{array}
\right.
\end{eqnarray}
Define a mapping $T$:
\begin{equation}
    \label{T}
   Tp_{k}(y) = \displaystyle\int_y^{1}\int_0^{y}\mu\rho_*^3[\sigma_*(z+\displaystyle\frac{1}{\rho_*^3}\int_{-\tau}^0\displaystyle\frac{\partial p_{k}}{\partial y}(\xi_{k}(s;z,0)) \dif s )-\tilde{\sigma}] \dif z  \dif y.
\end{equation}
}

Using the expression in \re{T} and
the extension \re{extend}, estimating respectively on the interval $[0,1]$ and $[1,2]$, we find  that
\be
\|\displaystyle Tp\big\|_{W^{2,\infty}[0,2]}
\le 3\mu \rho^3_{\max} (\bar\sigma +\tilde{\sigma}),
\ee
 and therefore $T$ maps $X$ into itself.

We shall establish that $T$ is a contraction, namely, for some $M<1$,

\begin{equation}
    \label{X}
   \|T\t p -T p \|_{X}\leq M \|\t p-p\|_{X}, \ \  \ \forall \  \t p, \   p\in X.
\end{equation}

Next, we prove \re{X}.
Let $\xi_{30}$ and $\t\xi_{30}$ be the corresponding solutions.   By integrating the first equation of $\re{3.7}$, we have
\begin{equation*}
    \begin{split}
        \max_{\substack{-\tau\le s_*\le 0\\0\le y\le 1}}|\t\xi_{30}(s_*;y,0) - \xi_{30}(s_*;y,0)| &= \max_{\substack{-\tau\le s_*\le 0\\0\le y\le 1}}\bigg|\displaystyle\frac{1}{\rho_*^3}\int_{s_*}^0\bigg[\displaystyle\frac{\partial \t p}{\partial y}(\t\xi_{30}(s_*;y))-\displaystyle\frac{\partial  p}{\partial y}(\xi_{30}(s_*;y)) \bigg]\dif s_*\bigg|\\
        &\le  \frac{\tau}{\rho_*^3}\bigg[ \|\t p -  p\|_{W^{2,\infty}[0,2]} + \| p\|_{W^{2,\infty}[0,2]}
         \max_{\substack{-\tau\le s_*\le 0\\0\le y\le 1}}|\t\xi_{30} - \xi_{30}|\bigg]\\
        &\le  \frac{\tau}{\rho_*^3}\bigg[ \|\t p -  p\|_{W^{2,\infty}[0,2]} + C
         \max_{\substack{-\tau\le s_*\le 0\\0\le y\le 1}}|\t\xi_{30} - \xi_{30}|\bigg],
    \end{split}
\end{equation*}
where   by the choice of our $X$,  $\| p\|_{W^{2,\infty}[0,2]}\le  3\mu \rho^3_{\max} (\bar\sigma +\tilde{\sigma})\triangleq C<\frac{\rho_*^3}{\tau}$
if $\tau$ is small. Thus
\begin{equation}
    \label{3.18}
    \max_{\substack{-\tau\le s_*\le 0\\0\le y\le 1}}|\t\xi_{30}(s_*;y,0) - \xi_{30}(s_*;y,0)| \le \frac{\tau}{\rho_*^3 - \tau C}\|\t p -  p\|_{W^{2,\infty}[0,2]}.
\end{equation}
From \re{3.18}, \re{extend} and \re{T},
 \begin{equation}
\begin{split}
    \label{3.22}
    &\|(T\t p-T p)''\|_{L^\infty[0,2]}\\
    &=\bigg\|\mu\rho_*^3\sigma_*\Big(y+\displaystyle\frac{1}{\rho_*^3}\int_{-\tau}^0\displaystyle\frac{\partial \t p}{\partial y}(\t\xi_{30}) \dif s_* \Big) - \mu\rho_*^3\sigma_*\Big(y+\displaystyle\frac{1}{\rho_*^3}\int_{-\tau}^0\displaystyle\frac{\partial  p}{\partial y}(\xi_{30}) \dif s_* \Big) \bigg\|_{L^\infty[0,1]} \\
   &\le \mu \rho^3_{\max}\Big\|\frac{\partial \sigma_*}{\partial y}\Big\|_{L^\infty[0,2]}\frac{\tau}{\rho^3_{\min}-C\tau}\|\t p -  p\|_{W^{2,\infty}[0,2]}.
  \end{split}
\end{equation}
Since $T p(1)=T \t p(1)=0$ and $(Tp)'(0)=(T\t p)'(0)=0$, the above estimates
imply
\be
\|T\t p-T p\|_{W^{2,\infty}[0,2]}
 \le  C \mu \rho^3_{\max}\Big\|\frac{\partial \sigma_*}{\partial y}\Big\|_{L^\infty[0,2]}\frac{\tau}{\rho^3_{\min}-C\tau}\|\t p -  p\|_{W^{2,\infty}[0,2]}.
\ee
If $\tau$ is suitably small, then $M \triangleq  C \mu \rho^3_{\max}\Big\|\frac{\partial \sigma_*}{\partial y}\Big\|_{L^\infty[0,2]}\frac{\tau}{\rho^3_{\min}-C\tau}<1$,  therefore
we established \re{X} and $T$ is a
contraction, which admits a unique fixed point  $p_*$. Substituting $p_*$ into
\re{3.7} and  from ODE  theory, we obtain $ \xi_*$.

To complete the proof, it suffices to show that there exists a unique solution $\rho_* \in [\rho_{\min}, \rho_{\max}]$  satisfying \re{3.8}.  After substituting \re{3.9} into \re{3.8}, we find that this is equivalent to solving the following equation for $\rho$:
$$
F(\rho,\tau) \triangleq \int_0^1 \Bigg\{ \overline{\sigma}\frac{\cosh\Big[\rho\Big(y+\displaystyle\frac{1}{\rho^3}\int_{-\tau}^0\displaystyle\frac{\partial p}{\partial y}(\xi_{30}(s_*;y)) \dif s_*\Big)\Big]}{\cosh\rho}- \tilde{\sigma}\Bigg\} \dif y = 0.$$

Clearly,
$$F(\rho,0)= \int_0^1 \Big(\overline{\sigma}\frac{\cosh(\rho y)}{\cosh\rho}- \tilde{\sigma}\Big) \dif y=\frac{\overline{\sigma}}{\rho}\tanh\rho-\tilde{\sigma},$$
and from \re{2.1} and \re{2.2},
$$\lim_{\rho\rightarrow0} F(\rho,0)=\overline{\sigma}-\widetilde{\sigma}>0, \ \ \ \lim_{\rho\rightarrow\infty} F(\rho,0)=-\widetilde{\sigma}<0. $$

Notice that \re{2.1} also implies that  $F(\rho,0)$ is monotone decreasing in $\rho$,  so that the equation $F(\rho,0) = 0$ admits
a unique solution (denoting by $\rho_S$) and
\be \label{rhoS}
F\Big(\frac{1}{2} \rho_S, 0\Big) > 0, \hspace{1em} F\Big(\frac{3}{2} \rho_S, 0\Big) < 0.
\ee
The mean value theorem implies, for some $0\leq\eta\leq\tau$,
\begin{equation}\nonumber
\frac{\partial F(\rho,\tau)}{\partial \rho}- \frac{\partial F(\rho,0)}{\partial \rho}=\frac{\partial^2F}{\partial \rho\partial \tau}(\rho,\eta)\tau
=O(\tau) .
\end{equation}
It follows that $\frac{\partial F(\rho,\tau)}{\partial \rho}<0$
when $\tau$ is small enough. In a similar argument, we also have
$$ F\Big(\frac{1}{2} \rho_S, \tau\Big) > 0, \hspace{1em} F\Big(\frac{3}{2} \rho_S, \tau\Big) < 0.$$
Therefore, when $\tau$ is small enough, the equation \re{3.8} admits
a unique solution $\rho_*$ satisfying $F(\rho_*,\tau)=0$ and $\frac12 \rho_S < \rho_* < \frac32 \rho_S$. The proof is complete with $\rho_{\min} = \frac12 \rho_S$ and $\rho_{\max} = \frac32 \rho_S$.
\qed\xx

\section{Linear Stability}
In this section, we consider the linear stability of the unique flat stationary  solution
$(\sigma_*,p_*,\rho_*,  \xi_*)$ obtained in section 3 under non-flat perturbations. We begin by taking some small non-flat perturbations on the initial conditions:
\begin{eqnarray}
&&\partial \Omega(t): y = \rho_* + \epsilon\rho_0(x_{1}, x_{2}),\hspace{2em} -\tau\le t\le0,\label{4.1}\\
&&{   p (x_{1}, x_{2},y,t) ={  p_*(y) + \epsilon }p_0 ( x_{1}, x_{2},y), \quad{ (x_{1}, x_{2},y)\in\Omega_0},\quad -\tau\le t \le 0} \label{4.2}.
\end{eqnarray}
Then for $t>0$, we expect to have formal expansion:
\begin{eqnarray}
     & &\partial \Omega(t): y = \rho_* + \epsilon \rho(x_{1}, x_{2},t)+O(\epsilon^2), \nonumber\\
    &&\sigma(x_{1}, x_{2},y,t) = \sigma_*(y) + \epsilon w(x_{1}, x_{2},y,t)+O(\epsilon^2), \label{4.3}\\
    &&p(x_{1}, x_{2},y,t)= p_*(y) + \epsilon q(x_{1}, x_{2},y,t)+O(\epsilon^2), \nonumber  \\
    && \xi(s;x_{1}, x_{2},y,t)=\xi_*(s-t;x_1,x_2,y)+\epsilon(\xi_{11}, \xi_{21}, \xi_{31})+O(\epsilon^2).\nonumber
\end{eqnarray}
Writing in Cartesian coordinates,
\begin{eqnarray}
     &&\xi (s;x_{1}, x_{2},y,t)=\xi_1(s;x_{1}, x_{2},y,t)\overrightarrow{i}+\xi_2(s;x_{1}, x_{2},y,t)\overrightarrow{j}+\xi_3(s;x_{1}, x_{2},y,t) \overrightarrow{k},\label{4.4}
\end{eqnarray}
we obtain from \re{1.4}--\re{1.5} that
\begin{equation}\label{4.7}
\left \{
\begin{split}
&\frac{\dif \xi_1}{\dif s} (s;x_{1}, x_{2},y,t)= -\frac{\partial p}{\partial x_{1}}(\xi_1,\xi_2,\xi_3,s),\quad t-\tau\le s\le t,\\
&\xi_1 (s;x_{1}, x_{2},y,t)\Big|_{s=t} =x_{1} ;
\end{split}
\right.
\end{equation}
\begin{equation}\label{4.8}
\left \{
\begin{split}
&\frac{\dif \xi_2}{\dif s} (s;x_{1}, x_{2},y,t)= -\frac{\partial p}{\partial x_{2}}(\xi_1,\xi_2,\xi_3,s),\quad t-\tau\le s\le t,\\
&\xi_2 (s;x_{1}, x_{2},y,t)\Big|_{s=t} =x_{2} ;
\end{split}
\right.
\end{equation}
\begin{equation}\label{4.9}
\left \{
\begin{split}
&\frac{\dif \xi_3}{\dif s} (s;x_{1}, x_{2},y,t)= -\frac{\partial p}{\partial y}(\xi_1,\xi_2,\xi_3,s),\quad t-\tau\le s\le t,\\
&\xi_3 (s;x_{1}, x_{2},y,t)\Big|_{s=t} =y.
\end{split}
\right.
\end{equation}
We then expand  $\xi_1, \xi_2, \xi_3$ in $\epsilon$ as
\begin{eqnarray}
     & &\xi_1 (s;x_{1}, x_{2},y,t)= x_{1} + \epsilon \xi_{11}(s;x_{1}, x_{2},y,t) + O(\epsilon^2),\nonumber\\
    &&\xi_2(s;x_{1}, x_{2},y,t) =  x_{2}+ \epsilon \xi_{21} (s;x_{1}, x_{2},y,t)+ O(\epsilon^2), \label{4.10}\\
    &&\xi_3(s;x_{1}, x_{2},y,t) =  \xi_{30}(s-t;y) + \epsilon \xi_{31} (s;x_{1}, x_{2},y,t)+ O(\epsilon^2). \nonumber
\end{eqnarray}
Recalling that we already obtained the zeroth order equation  $\frac{\dif \xi_{30}}{\dif s_*} = - \frac{\partial p_*}{\partial y}{  (\xi_{30}(s_*;y))}$, $\xi_{30}\Big|_{ {s_*=0}} = y$ (cf. \re{3.3}).
By substituting \re{4.10} into \re{4.7}--\re{4.9} and dropping the higher order terms, we  obtain the first order equations for $\xi$:
\begin{eqnarray}
&&\label{4.12}\left \{
\begin{array}{lr}
\dis\frac{\dif \xi_{11}}{\dif s} =  - \dis\frac{\partial q}{\partial x_{1}}{ (x_{1}, x_{2},  \xi_{30}, s)},\hspace{2em} t-\tau\le s \le t,\\
\xi_{11}\Big|_{s=t} = 0;
\end{array}
\right.
\end{eqnarray}
\begin{eqnarray}
&&\label{4.14}\left \{
\begin{array}{lr}
\dis\frac{\dif \xi_{21}}{\dif s}=  - \dis\frac{\partial q}{\partial  x_{2}}{  (x_{1}, x_{2}},\xi_{30}, s),
\hspace{2em} t-\tau\le s \le t,\\
\xi_{21}\Big|_{s=t} = 0;
\end{array}
\right.
\end{eqnarray}
\begin{eqnarray}
&&\label{4.16}\left \{
\begin{array}{lr}
\dis\frac{\dif \xi_{31}}{\dif s} = -\dis\frac{\partial^2 p_*}{\partial y^2}{ (\xi_{30}(s-t;y))}\xi_{31}(s;x_{1}, x_{2},y,t) - \dis\frac{\partial q}{\partial y}{  (x_{1}, x_{2}, \xi_{30},s)},
\hspace{2em} t-\tau\le s \le t,\\
\xi_{31}\Big|_{s=t} = 0.
\end{array}
\right.
\end{eqnarray}

By substituting \re{4.3} and \re{4.12}--\re{4.16} into \re{1.1}--\re{1.9}, applying  the following  mean-curvature
formula in the 3-dimensional case for $y=f(x_1,x_2)$:
\[
 \kappa = -\frac{(1+f_{x_{2}}^2)f_{x_{1}x_{1}}+(1+f_{x_{1}}^2)f_{x_{2}x_{2}}-2f_{x_{1}}f_{x_{2}}f_{x_{1}x_{2}}}{2\big(1+f_{x_{1}}^2+f_{x_{2}}^2\big)^{3/2}},
\]
and collecting the $\epsilon$-order terms, we get the linearized system of \re{1.1}--\re{1.9}:
\begin{eqnarray}
&&\label{4.17}\left \{
\begin{array}{lr}
\Delta w(x_{1}, x_{2},y,t) = w(x_{1}, x_{2},y,t),  \hspace{2em}0<y<\rho_{\ast}, \hspace{2em}t>0,\\
w(x_{1}, x_{2},y,t)\Big|_{y=\rho_{\ast}}=-\dis\frac{\partial \sigma_*}{\partial y}\bigg|_{y=\rho_*}\rho(x_{1}, x_{2},t),\m
\dis\frac{\partial w}{\partial y}(x_{1}, x_{2},y,t)\Big|_{y=0}=0,
\end{array}
\right.\\
&&\label{4.18}\left \{
\begin{array}{lr}
- \Delta q(x_{1}, x_{2},y,t) = \mu\dis\frac{\partial \sigma_*}{\partial y}{{  (\xi_{30}(-\tau;y))}}\xi_{31}(t-\tau;x_{1}, x_{2},y,t)\\
 \ \ \ \ \ \ \ \ \ \ \ \ \ \ \ \ \ \ \ \ \ \ \ \ \ +\mu w{  (x_{1}, x_{2},\xi_{30}(-\tau;y)}, t-\tau),  \hspace{2em}0<y<\rho_{\ast}, \hspace{2em}t>0,\\
q(x_{1}, x_{2},y,t)\Big|_{y=\rho_{\ast}}=-\dis \frac12
(\rho_{x_{1}x_{1}}+\rho_{x_{2}x_{2}}),\m
\dis\frac{\partial q}{\partial y}(x_{1}, x_{2},y,t)\Big|_{y=0}=0,
\end{array}
\right.\\
&&\dis\frac{\dif \rho}{\dif t}(x_{1}, x_{2},t) = -\dis\frac{\partial^2 p_*}{\partial y^2}\bigg|_{y=\rho_*}\rho(x_{1}, x_{2},t) - \dis\frac{\partial q}{\partial y}\bigg|_{y=\rho_*}(x_{1}, x_{2},y,t).\label{4.19}
\end{eqnarray}
 Together with  \re{4.16}, we obtain a linearized system.
We look for solutions of the form:
\begin{eqnarray*}
&& w(x_{1}, x_{2},y,t) =
w_{n,m}(y,t)\cos(nx_{1})\cos(m x_{2}),\\
&&  q(x_{1}, x_{2},y,t) =
  q_{n,m}(y,t)\cos(nx_{1})\cos(m x_{2}),\\
&& \rho(x_{1}, x_{2},t) =
\rho_{n,m}(t)\cos(nx_{1})\cos(m x_{2}),\\
&&  \xi_{31}(s;x_{1}, x_{2},y,t) =
  \varphi_{n,m}(s;y,t)\cos(nx_{1})\cos(m x_{2}).
\end{eqnarray*}
As we shall easily  verify that the equations for $w_{n,m}, q_{n,m}, \rho_{n,m}, \phi_{n,m}$ will not change
if we replace $\cos(n x_1)\cos(m x_2)$ by any of the following
\[
 \cos(n x_1)\sin(m x_2), \m \sin(n x_1)\cos(m x_2), \m \sin(n x_1)\sin(m x_2).
 \]
 These constitute a base for the
 Fourier series $2\pi$ periodic in $x_1$ and $x_2$.

From \re{4.17}--\re{4.19}, we derive
\begin{eqnarray}
&&\label{4.20}\left \{
\begin{array}{lr}
- \dis\frac{\partial^2w_{n,m}}{\partial y^2}(y,t) +(n^{2}+m^{2}+1)w_{n,m}(y,t) = 0,\\

w_{n,m}(\rho_*,t) = -\dis\frac{\partial \sigma_*}{\partial y}\Big |_{y=\rho_*} \rho_{n,m}(t),\m
\dis\frac{\partial w_{n,m}}{\partial y}(0,t) =0,
\end{array}
\right.\\
&&\label{4.21}\left \{
\begin{array}{lr}
-\dis\frac{\partial^2 q_{n,m}}{\partial y^2}(y,t)+ (n^{2}+m^{2})q_{n,m}(y,t) = \mu w_{n,m}{    (\xi_{30}(-\tau;y)},t-\tau)\\
 \ \ \ \ \ \ \ \ \ \ \ \ \ \ \ \ \ \ \ \ \ \ \ \ \ \ \ \ \ \ \ \ \ \ \ \ \ \ \ \ \ \ \ \ \ \ \ \  \ \ + \mu\dis\frac{\partial \sigma_*}{\partial y}{   (\xi_{30}(-\tau;y))}\varphi_{n,m}(t-\tau;y,t),\\
 q_{n,m}(\rho_*,t) =\dis \frac12(n^{2}+m^{2}) \rho_{n,m}(t),\m
\dis\frac{\partial q_{n,m}}{\partial y}(0,t) =0 ,
\end{array}
\right.\\
&&\label{4.22}\left \{
\begin{array}{lr}
\dis\frac{\partial \varphi_{n,m}}{\partial s}(s;y,t) = -\dis\frac{\partial^2 p_*}{\partial y^2}{  (\xi_{30}(s-t;y))}\varphi_{n,m}(s;y,t) - \dis\frac{\partial q_{n,m}}{\partial y}{  (\xi_{30}(s-t;y),s)}, \\  \ \ \ \ \ \ \ \ \ \ \ \ \ \ \ \ \ \ \ \ \ \ \ \ \ \ \ \ \ \ \ \ \ \ \ \ \ \ \ \ \ \ \ \ \ \ \ \  \ \  \ \ \ \ \ \ \ \ \ \ \ \ \ \ \ \ \ \ \ \ \ \ \ \ \ \ \ \ \ \ \ \ \ \ \ \ \ \  t-\tau\le s \le t,\\
\varphi_{n,m}(s;y,t)\Big|_{s=t} = 0,
\end{array}
\right.\\
&&\dis\frac{\dif \rho_{n,m}}{\dif t} (t)= -\dis\frac{\partial^2 p_*}{\partial y^2}\Big |_{y=\rho_*} \rho_{n,m}(t) - \frac{\partial q_{n,m}}{\partial y}\Big |_{y=\rho_*}.\label{4.23}
\end{eqnarray}
Solving \re{4.20}, we obtain
\begin{equation}\label{4.24}
 w_{n,m} =-\frac{\partial \sigma_*}{\partial y}(\rho_*) \rho_{n,m}(t)\frac{\cosh(\sqrt{n^{2}+m^{2}+1}y)}{\cosh(\sqrt{n^{2}+m^{2}+1}\rho_*)}.
\end{equation}

\subsection{Expansion in $\tau$.} As $\tau$ is small, we seek expansion in $\tau$ of the form:
\begin{eqnarray*}
&&\rho_*=\rho_*^0 + \tau \rho_*^1 + O(\tau^2),\\
&&\sigma_* (y)=\sigma_*^0(y) + \tau\sigma_*^1(y) + O(\tau^2),\\
&&p_*(y) = p_*^0(y) +  \tau p_*^1 (y) + O(\tau^2),\\
&&w_{n,m}(y,t) = w_{n,m}^0(y,t) + \tau w_{n,m}^1(y,t) + O(\tau^2),\\
&&q_{n,m}(y,t) = q_{n,m}^0 (y,t)+ \tau q_{n,m}^1(y,t) + O(\tau^2),\\
&&\rho_{n,m}(t) = \rho_{n,m}^0(t) + \tau\rho_{n,m}^1(t)  + O(\tau^2).
\end{eqnarray*}
\subsubsection{Expansion of \re{3.1}.} From \re{3.1}, we find
$$\sigma_*^0 + \tau\sigma_*^1 + O(\tau^2)= \bar{\sigma}\frac{\cosh y}{\cosh \rho_*}=  \bar{\sigma}\Big\{\frac{\cosh y}{\cosh \rho_*^0} -\tau \frac{\cosh y \sinh \rho_*^0 }{\cosh^{2}\rho_*^0} \rho_*^1+ O(\tau^2)\Big\},$$
therefore,
\begin{eqnarray}
&&\sigma_*^0(y) = \bar{\sigma}\frac{\cosh y}{\cosh \rho_*^0}\label{4.25},\hspace{2em}\sigma_*^1(y) = -\bar{\sigma}\rho_*^1 \frac{\sinh \rho_*^0  }{\cosh^{2}\rho_*^0}\cosh y .\label{4.26}
\end{eqnarray}
The boundary conditions in \re{3.1} are expanded as follows:
$$\sigma_*^0(\rho_*^0) + \tau\frac{\partial \sigma_*^0}{\partial y}(\rho_*^0)\rho_*^1 + \tau \sigma_*^1(\rho_*^0) + O(\tau^2) = \overline{\sigma}, \mm$$
Thus,
\begin{eqnarray}\nonumber
&&\sigma_*^0(\rho_*^0)  = \overline{\sigma},\hspace{2em}\displaystyle\frac{\partial \sigma_*^0 }{\partial y}\Big|_{y=0}=0,\\\nonumber
&&\sigma_*^1(\rho_*^0) = -\frac{\partial \sigma_*^0}{\partial y}(\rho_*^0)\rho_*^1,\hspace{2em}
\displaystyle\frac{\partial \sigma_*^1 }{\partial y}\Big|_{y=0}=0.
\end{eqnarray}
\subsubsection{Expansion of \re{3.2}.}
Integrating equation \re{3.3} over the interval $(-\tau,0)$, we derive
\begin{equation}\label{4.27}
 \xi_{30}(-\tau;y)  = y + \int_{-\tau}^0 \frac{\partial p_*}{\partial y}({  \xi_{30}(s;y))}\dif s =
 y + \tau \frac{\partial p_*^0}{\partial y} (y)+ O(\tau^2).
 \end{equation}
It follows that
\begin{equation}\nonumber
    \begin{split}
    \sigma_*({  \xi_{30}(-\tau;y)) }&= \sigma_*\bigg(y + \tau \frac{\partial p_*^0}{\partial y} (y)+ O(\tau^2)\bigg)
        &= \sigma_*^0(y) + \tau\Big(\frac{\partial \sigma_*^0}{\partial y}(y)\frac{\partial p_*^0}{\partial y}(y) +  \sigma_*^1(y)\Big) + O(\tau^2).    \end{split}
\end{equation}
Substituting the above expression into the right-hand side of \re{3.2}, we get
\begin{gather}\nonumber
-\frac{\partial^2 p_*^0}{\partial y^2} = \mu(\sigma_*^0 - \tilde{\sigma}), \hspace{2em}
-\frac{\partial^2 p_*^1}{\partial y^2}  = \mu\frac{\partial \sigma_*^0}{\partial y}\frac{\partial p_*^0}{\partial y}+\mu\sigma_*^1.
\end{gather}
The boundary conditions in \re{3.2} are expanded as follows:
$$
    p_*^0(\rho_*^0) + \tau\frac{\partial p_*^0}{\partial y}(\rho_*^0)\rho_*^1 + \tau p_*^1(\rho_*^0) + O(\tau^2) = 0, \mm
$$
Therefore
\begin{eqnarray}\nonumber
&&p_*^0(\rho_*^0)  = 0,\hspace{2em}\displaystyle\frac{\partial p_*^0 }{\partial y}\Big|_{y=0}=0,\hspace{2em}
p_*^1(\rho_*^0) = -\frac{\partial p_*^0}{\partial y}(\rho_*^0)\rho_*^1,\hspace{2em}
\displaystyle\frac{\partial p_*^1 }{\partial y}\Big|_{y=0}=0.
\end{eqnarray}
\subsubsection{Expansion of \re{3.4}.} From \re{4.27}, we have
\begin{equation}\label{4.28}
    \begin{split}
      0 & =  \int_0^{\rho_*}[\sigma_*({  \xi_{30}(-\tau;y))}-\tilde{\sigma}]
      \dif y\\
        &= \int_0^{\rho_*}[\sigma_*^0(y)-\tilde{\sigma}]\dif y + \tau \int_0^{\rho_*^0}\bigg[\frac{\partial \sigma_*^0}{\partial y}(y)\frac{\partial p_*^0}{\partial y}(y)+\sigma_*^1(y)\bigg]\dif y + O(\tau^2).
    \end{split}
\end{equation}
The first part of \re{4.28} can be integrated as follows:
\begin{equation}\label{4.29}
    \begin{split}
      \int_0^{\rho_*} [\sigma_*^0(y)-\tilde{\sigma}]\dif y&=  \int_0^{\rho_*}\bigg(\bar{\sigma}\frac{\cosh y}{\cosh \rho_*^0}-\tilde{\sigma}\bigg)\dif y
        = \bar{\sigma}\frac{\sinh \rho_*}{\cosh \rho_*^0}-\tilde{\sigma}\rho_*\\
        &=\bar{\sigma}\tanh\rho_*^0-\tilde{\sigma}\rho_*^0+\tau(\bar{\sigma}\rho_*^1-\tilde{\sigma}\rho_*^1)+ O(\tau^2).
    \end{split}
\end{equation}
By combining \re{4.28} and \re{4.29}, we deduce
\begin{gather}\nonumber
    \bar{\sigma}\tanh\rho_*^0- \tilde{\sigma}\rho_*^0= 0,\\\nonumber
     (\bar{\sigma} -\tilde{\sigma})\rho_*^1+ \int_0^{\rho_*^0}\bigg[\frac{\partial \sigma_*^0}{\partial y}(y)\frac{\partial p_*^0}{\partial y}(y)+\sigma_*^1(y)\bigg]\dif y= 0.
\end{gather}
Recalling  the definitions of $\rho_s$ in \re{rhoS} and $F(\rho, 0)$ in the proof of Theorem \ref{T1.1}, we must have $\rho_*^0 = \rho_S$.
\subsubsection{Expansion of \re{4.20}.} The expansion of the first equation in \re{4.20} is straightforward  we omit it. The boundary conditions are evaluated as follows:
\begin{equation}
\begin{split}\nonumber
&w_{n,m}^0(\rho_*^0)+\tau\frac{\partial w_{n,m}^0}{\partial y}(\rho_*^0)\rho_*^1 + \tau w_{n,m}^1(\rho_*^0)\\
&= - \bigg(\frac{\p\sigma_*^0}{\p y}(\rho_*^0) +\tau\frac{\p^{2}\sigma_*^0}{\p y^{2}}( \rho_*^0)\rho_*^1+
\tau \frac{\p\sigma_*^1}{\p y}(\rho_*^0) \bigg)\bigg [\rho_{n,m}^0(t) + \tau \rho_{n,m}^1(t) \bigg] + O(\tau^2),
\end{split}
\end{equation}
which implies
\bea\nonumber
 && w_{n,m}^0(\rho_*^0,t) = -\frac{\partial \sigma_*^0}{\partial y}(\rho_*^0)\rho_{n,m}^0(t), \\\nonumber
 && w_{n,m}^1(\rho_*^0,t) = -\frac{\partial w_{n,m}^0}{\partial y}(\rho_*^0,t)\rho_*^1-\frac{\partial \sigma_*^0}{\partial y}(\rho_*^0)\rho_{n,m}^1(t) - \frac{\partial^2 \sigma_*^0}{\partial y^2}(\rho_*^0)\rho_*^1 \rho_{n,m}^0(t) -\frac{\partial \sigma_*^1}{\partial y}(\rho_*^0)\rho_{n,m}^0(t).
\eea
Similarly, the boundary condition at $\{y=0\}$ is evaluated:
$$
\displaystyle\frac{\partial w_{n,m}^0 }{\partial y}\Big|_{y=0}+\tau\displaystyle\frac{\partial w_{n,m}^1 }{\partial y}\Big|_{y=0}+ O(\tau^2)=0,
$$
which gives
$$\frac{\partial  w_{n,m}^0}{\partial y}|_{y=0}=0,\hspace{2em}\frac{\partial  w_{n,m}^1}{\partial y}|_{y=0}=0.$$
\subsubsection{Expansion of \re{4.21}.} To find the expansion of \re{4.21}, we first compute, by \re{4.22}:
\begin{equation}\nonumber\begin{split}
\varphi_{n,m}(t-\tau;y,t) =& \varphi_{n,m}(t;y,t) - \tau\frac{\partial \varphi_{n,m}}{\partial s}(t;y,t) + O(\tau^2)\\
 =&   0 -\tau \bigg(-\frac{\partial^2 p_*}{\partial y^2}(y)\varphi_{n,m}(t;y,t)-\frac{\partial q_{n,m}}{\partial y}(y,t)\bigg) + O(\tau^2)\\
=& \tau\frac{\partial q_{n,m}^0}{\partial y}(y,t) + O(\tau^2);
\end{split}
\end{equation}
it follows that
\begin{equation}\label{4.30}
    \begin{split}
\frac{\partial \sigma_*}{\partial y} (\xi_{30}(-\tau;y))\varphi_{n,m}(t-\tau;y,t)
        &= \bigg(\frac{\partial \sigma_*^0}{\partial y}(y) + O(\tau)\bigg)\bigg(\tau \frac{\partial q_{n,m}^0}{\partial y}(y,t) + O(\tau^2)\bigg)\\
        &= \tau \frac{\partial \sigma_*^0}{\partial y}(y)\frac{\partial q_{n,m}^0}{\partial y}(y,t) + O(\tau^2).
    \end{split}
\end{equation}
On the other hand,
\begin{equation}\label{4.31}
\begin{split}
 & \hspace{-3em} w_{n,m} (\xi_{30}(-\tau;y),t-\tau)\\
    =&w_{n,m}\Big({ \xi_{30}(0;y)}-\tau\frac{\partial \xi_{30}}{\partial s}(0;y)+O(\tau^2),t-\tau\Big)\\
     =&w_{n,m}\Big(y+\tau\frac{\partial p_*}{\partial y}(\xi_{30}(0;y))+O(\tau^2),t-\tau\Big) \mm { (\text{ by }  \re{3.3} })\\
      =&w_{n,m}\Big(y+\tau\frac{\partial p_*^0}{\partial y}(y)+O(\tau^2),t-\tau\Big)\\
      =&w_{n,m}(y,t)+\tau\frac{\partial w_{n,m}}{\partial y}(y,t)\frac{\partial p_*^0}{\partial y}(y)-\tau\frac{\partial w_{n,m}}{\partial t}(y,t)+ O(\tau^2)\\
    =& w_{n,m}^0(y,t) + \tau\bigg[\frac{\partial w_{n,m}^0}{\partial y}(y,t)\frac{\partial p_*^0}{\partial y}(y) - \frac{\partial w_{n,m}^0}{\partial t}(y,t) + w_{n,m}^1(y,t) \bigg] + O(\tau^2).
    \end{split}
\end{equation}
By substituting  \re{4.30}--\re{4.31} into \re{4.21}, we obtain
\begin{gather}\nonumber
    -\frac{\partial^2 q_{n,m}^0}{\partial y^2}+ (n^{2}+m^{2})q_{n,m}^0 = \mu w_{n,m}^0,\\\nonumber
    -\frac{\partial^2 q_{n,m}^1}{\partial y^2}+ (n^{2}+m^{2})q_{n,m}^1 = \mu \frac{\partial \sigma_*^0}{\partial y}\frac{\partial q_{n,m}^0}{\partial y}+ \mu\frac{\partial w_{n,m}^0}{\partial y}\frac{\partial p_*^0}{\partial y} -\mu\frac{\partial w_{n,m}^0}{\partial t} + \mu w_{n,m}^1.
\end{gather}
The first boundary condition in \re{4.21} is given by
\[
q_{n,m}^0(\rho_*^0,t)+\tau \frac{\partial q_{n,m}^0}{\partial y} (\rho_*^0,t) \rho_*^1+\tau q_{n,m}^1 (\rho_*^0,t)= \dis \frac12(n^{2}+m^{2})[\rho_{n,m}^0(t) + \tau \rho_{n,m}^1(t) ] + O(\tau^2),
\]
which implies
\bea\nonumber
 &&  q_{n,m}^0(\rho_*^0,t) = \dis \frac12(n^{2}+m^{2})\rho_{n,m}^0(t),  \\\nonumber
 && q_{n,m}^1(\rho_*^0,t)=-\frac{\partial q_{n,m}^0}{\partial y}(\rho_*^0,t)\rho_*^1 + \dis \frac12(n^{2}+m^{2})\rho_{n,m}^1(t).
\eea
Likewise, the second boundary condition is evaluated as:
$$
\displaystyle\frac{\partial q_{n,m}^0 }{\partial y}\Big|_{y=0}+\tau\displaystyle\frac{\partial q_{n,m}^1 }{\partial y}\Big|_{y=0}+ O(\tau^2)=0,
$$
which gives
$$\frac{\partial  q_{n,m}^0}{\partial y}\Big|_{y=0}=0,\hspace{2em}\frac{\partial  q_{n,m}^1}{\partial y}\Big|_{y=0}=0.$$
\subsubsection{Expansion of \re{4.23}.}
The following equation
\bea
\begin{split}\nonumber
 \frac{\dif}{\dif t}[\rho_{n,m}^0(t)+\tau \rho_{n,m}^1(t)] =&
 - \bigg(\frac{\p^2 p_*^0}{\p y^2}(\rho_*^0+\tau \rho_*^1) +
\tau \frac{\p^2 p_*^1}{\p y^2}(\rho_*^0) \bigg) [\rho_{n,m}^0(t) + \tau \rho_{n,m}^1(t) ]\\
&- \frac{\p( q_{n,m}^0+\tau q_{n,m}^1)}{\p y}(\rho_*^0+\tau \rho_*^1) + O(\tau^2)
\end{split}
\eea
implies
\bea\nonumber
&& \frac{\dif \rho_{n,m}^0(t)}{\dif t}  = -\frac{\partial ^2 p_*^0}{\partial y^2}(\rho_*^0)\rho_{n,m}^0(t) - \frac{\partial q_{n,m}^0}{\partial y}(\rho_*^0,t), \\\nonumber
&&     \begin{split}
    \frac{\dif \rho_{n,m}^1(t)}{\dif t}= &-\frac{\partial^2 p_*^0}{\partial y^2}(\rho_*^0)\rho_{n,m}^1(t)-\frac{\partial^3 p_*^0}{\partial y^3}(\rho_*^0)\rho_*^1 \rho_{n,m}^0(t)\\
    &-\frac{\partial^2 p_*^1}{\partial y^2}(\rho_*^0)\rho_{n,m}^0(t)-\frac{\partial^2 q_{n,m}^0}{\partial y^2}(\rho_*^0,t)\rho_*^1 - \frac{\partial q_{n,m}^1}{\partial y}(\rho_*^0,t).
    \end{split}
\eea

\subsection{zeroth-order terms in $\tau$} Collecting the zeroth-order terms from subsection 4.1, we obtain
\begin{eqnarray}
&&\label{4.32}\sigma_*^0(y) = \bar{\sigma}\frac{\cosh y}{\cosh \rho_*^0},\\
&&\label{4.33}\left \{
\begin{array}{lr}
-   (p_*^0)''(y) = \mu[\sigma_*^0(y)-\tilde{\sigma}], \hspace{2em} 0<y< \rho,\\
p_*^0(\rho^0_*) = 0,\m
\displaystyle\frac{\partial p_*^0}{\partial y}(0)=0,
\end{array}
\right.\\
&&\label{4.34'}\frac{\tanh \rho^0_*}{\rho^0_*}=\frac{\tilde{\sigma}}{\overline{\sigma}}, \mm i.e., \;\; \rho_*^0 = \rho_S,\\
&&\label{4.35}\left \{
\begin{array}{lr}
- \displaystyle\frac{\partial^2w^{0}_{n,m}}{\partial y^2}(y,t) +(n^{2}+m^{2}+1)w^{0}_{n,m}(y,t) = 0,\\
w^{0}_{n,m}(\rho^{0}_*,t) = -\displaystyle\frac{\partial \sigma^{0}_*}{\partial y}\Big |_{y=\rho^{0}_*} \rho^{0}_{n,m}(t),\m
\displaystyle\frac{\partial w^{0}_{n,m}}{\partial y}(0,t)=0,
\end{array}
\right.
\\
&&\label{4.36}\left \{
\begin{array}{lr}
-\displaystyle\frac{\partial^2 q^{0}_{n,m}}{\partial y^2}(y,t)+ (n^{2}+m^{2})q^{0}_{n,m}(y,t) = \mu w^{0}_{n,m}(y,t),\\
q^{0}_{n,m}(\rho^{0}_*,t) =\dis \frac12(n^{2}+m^{2}) \rho^{0}_{n,m}(t),\m
\displaystyle\frac{\partial q^{0}_{n,m}}{\partial y}(0,t)=0,\\
\end{array}
\right.
\\
&&\frac{\dif \rho_{n,m}^0(t)}{\dif t} = -\frac{\partial ^2 p_*^0}{\partial y^2}(\rho_*^0)\rho_{n,m}^0(t) - \frac{\partial q_{n,m}^0}{\partial y}(\rho_*^0,t).\label{4.37}
\end{eqnarray}
From \re{4.32}, we obtain
\bea\label{4.38}
 &&  \frac{\partial\sigma_*^0}{\partial y}(y)  = \overline{\sigma}\frac{\sinh y}{\cosh \rho_*^0},\hspace{2em}
\frac{\partial^{2} \sigma_*^0}{\partial y^{2}}(y)  =\overline{\sigma}\frac{\cosh y}{\cosh \rho_*^0}.
\eea
Substituting \re{4.32} into  \re{4.33}, we solve $p^0_*$ explicitly:
\begin{equation}
    p_*^0(y) = \frac{1}{2}\mu \tilde{\sigma} y^2 +\mu\overline{\sigma}- \frac{1}{2}\mu\tilde{\sigma}(\rho_*^0)^2-\mu \overline{\sigma}\frac{\cosh y}{\cosh \rho_*^0};
\end{equation}
it follows that
\bea
 && \label{4.39} \frac{\partial p_*^0}{\partial y}(y)  = \mu \tilde{\sigma} y-\mu \overline{\sigma}\frac{\sinh y}{\cosh \rho_*^0},\hspace{2em} \frac{\partial^{2} p_*^0}{\partial y^{2}}(y)  = \mu \tilde{\sigma} -\mu \overline{\sigma}\frac{\cosh y}{\cosh \rho_*^0}.
\eea
Similarly, \re{4.35} is solved explicitly:
\begin{equation}\label{4.43}
    w_{n,m}^0(y,t) = -\frac{\partial  \sigma_*^0}{\partial y}(\rho_*^0)\frac{\cosh (\sqrt{1+n^{2}+m^{2}}y)}{\cosh (\sqrt{1+n^{2}+m^{2}} \rho_*^0)}\rho_{n,m}^0(t).
\end{equation}
To solve $q^{0}_{n,m}$, we let
\begin{equation}\label{4.44}
z(y,t)=\displaystyle\frac{\partial q^{0}_{n,m}}{\partial y}(y,t)-\sqrt{n^{2}+m^{2}}q^{0}_{n,m}(y,t).
\end{equation}
Then
$$
\frac{\p z}{\p y}(y,t)+\sqrt{n^{2}+m^{2}}z(y,t)=- \mu w^{0}_{n,m}(y,t) = \mu\frac{\partial  \sigma_*^0}{\partial y}(\rho_*^0)\frac{\cosh \sqrt{1+n^{2}+m^{2}}y}{\cosh \sqrt{1+n^{2}+m^{2}} \rho_*^0}\rho_{n,m}^0(t).
$$
Using the integration formula \re{2.3}, we obtain
\begin{equation}\label{4.45}
    \begin{split}
        z(y,t)&=e^{-\sqrt{n^{2}+m^{2}}y}\bigg[\displaystyle\int \mu\displaystyle\frac{\partial  \sigma_*^0}{\partial y}(\rho_*^0)\rho_{n,m}^0(t)\frac{\cosh (\sqrt{1+n^{2}+m^{2}}y)}{\cosh (\sqrt{1+n^{2}+m^{2}} \rho_*^0)} e^{\sqrt{n^{2}+m^{2}}y} \dif y +C_{1}\bigg]\\
         &=\mu\displaystyle\frac{\partial  \sigma_*^0}{\partial y}(\rho_*^0)\rho_{n,m}^0(t)\frac{\sinh( \sqrt{1+n^{2}+m^{2}} y)}{\cosh (\sqrt{1+n^{2}+m^{2}} \rho_*^0)}\sqrt{1+n^{2}+m^{2}}\\
 & \ \ \ -\mu\displaystyle\frac{\partial  \sigma_*^0}{\partial y}(\rho_*^0)\rho_{n,m}^0(t)\frac{\cosh (\sqrt{1+n^{2}+m^{2}}y) }{\cosh (\sqrt{1+n^{2}+m^{2}} \rho_*^0)}\sqrt{n^{2}+m^{2}}+C_{1}e^{-\sqrt{n^{2}+m^{2}}y},
    \end{split}
\end{equation}
where $C_{1}$ is independent of $y$.

From \re{4.44} and integration formula \re{2.4}--\re{2.5}, we get
\begin{equation}\label{4.46}
    \begin{split}
       q^{0}_{n,m}&=e^{\sqrt{n^{2}+m^{2}}y}\bigg(\displaystyle\int e^{-\sqrt{n^{2}+m^{2}}y}z\dif y +C_{2}\bigg)\\
&=\mu\displaystyle\frac{\partial  \sigma_*^0}{\partial y}(\rho_*^0)\rho_{n,m}^0(t)\frac{\cosh (\sqrt{1+n^{2}+m^{2}}y )}{\cosh (\sqrt{1+n^{2}+m^{2}} \rho_*^0)}-\displaystyle\frac{C_{1}}{2\sqrt{n^{2}+m^{2}}}e^{-\sqrt{n^{2}+m^{2}}y}+C_{2}e^{\sqrt{n^{2}+m^{2}}y}.
    \end{split}
\end{equation}
Applying the boundary conditions in \re{4.36}, we find
\begin{eqnarray}
&&\label{4.47}
\begin{array}{lr}
C_{1}=\displaystyle\frac{\bigg[2\mu\dis\frac{\partial  \sigma_*^0}{\partial y}(\rho_*^0)-(n^{2}+m^{2})\bigg]\sqrt{n^{2}+m^{2}}}{2\cosh (\sqrt{n^{2}+m^{2}}\rho_*^0)}\rho_{n,m}^0(t),\;
C_{2}=\displaystyle\frac{\dis \frac12(n^{2}+m^{2})-\mu\dis\frac{\partial  \sigma_*^0}{\partial y}(\rho_*^0)}{2\cosh (\sqrt{n^{2}+m^{2}}\rho_*^0)}\rho_{n,m}^0(t).
\end{array}
\end{eqnarray}
Substituting the expressions $C_1$ and $C_2$ into \re{4.46}, we derive
\begin{eqnarray}\nonumber
q^{0}_{n,m}
=\rho_{n,m}^0(t)\Bigg\{\mu\displaystyle\frac{\partial  \sigma_*^0}{\partial y}(\rho_*^0) \frac{\cosh (\sqrt{1+n^{2}+m^{2}}y) }{\cosh (\sqrt{1+n^{2}+m^{2}} \rho_*^0)}+ \displaystyle\frac{(n^{2}+m^{2})-2\mu\displaystyle\frac{\partial  \sigma_*^0}{\partial y}(\rho_*^0)}{2\cosh (\sqrt{n^{2}+m^{2}}\rho_*^0)}\cosh (\sqrt{n^{2}+m^{2}}y) \Bigg\}.
\end{eqnarray}
Hence
\bea\label{4.48}
\begin{split}
\frac{\partial q^{0}_{n,m}}{\partial y}(\rho_*^0)
=&\mu\displaystyle\frac{\partial  \sigma_*^0}{\partial y}(\rho_*^0)\rho_{n,m}^0(t)\sqrt{1+n^{2}+m^{2}}\tanh(\sqrt{1+n^{2}+m^{2}} \rho_*^0)\\
&+\dis\frac12\rho_{n,m}^0(t)\bigg(n^{2}+m^{2}-2\mu\dis\frac{\partial  \sigma_*^0}{\partial y}(\rho_*^0)\bigg)\sqrt{n^{2}+m^{2}}\tanh (\sqrt{n^{2}+m^{2}}\rho_*^0).
\end{split}
\eea
Combining \re{4.39} and \re{4.48} with \re{4.37}, we get
\begin{equation}\label{rho}
\frac{\dif \rho_{n,m}^0(t)}{\dif t} =h_{n,m}(\mu,\rho_*^0)\rho_{n,m}^0(t),
\end{equation}
where,  after substituting $\frac{\p \s_*^0}{\p y}(\rho_*^0)=\bar\sigma\tanh \rho_*^0$
and $\tilde\sigma = \frac{\bar \sigma \tanh \rho_*^0}{\rho_*^0}$ into the expressions,
\bea\label{4.50}
\begin{split}
h_{n,m}(\mu,\rho_*^0)
=&\mu\bigg[   \overline{\sigma}- \widetilde{\sigma}-\displaystyle\frac{\partial  \sigma_*^0}{\partial y}(\rho_*^0)\sqrt{1+n^{2}+m^{2}}\tanh(\sqrt{1+n^{2}+m^{2}} \rho_*^0) \\
&+ \dis\frac{\partial  \sigma_*^0}{\partial y} (\rho_*^0)\sqrt{n^{2}+m^{2}}\tanh (\sqrt{n^{2}+m^{2}}\rho_*^0) \bigg]-\dis\frac12(n^{2}+m^{2})^{3/2}\tanh (\sqrt{n^{2}+m^{2}}\rho_*^0) \\
=& \mu \bar \sigma \bigg[1-\frac{ \tanh \rho_*^0}{\rho_*^0}
- \sqrt{1+n^{2}+m^{2}}\;\tanh \rho_*^0 \;\tanh(\sqrt{1+n^{2}+m^{2}} \rho_*^0)\\
&+  \sqrt{n^{2}+m^{2}}\;\tanh \rho_*^0 \;\tanh (\sqrt{n^{2}+m^{2}}\rho_*^0) \bigg]-\dis\frac12(n^{2}+m^{2})^{3/2}\tanh (\sqrt{n^{2}+m^{2}}\rho_*^0).
\end{split}
\eea

\subsection{first-order terms in $\tau$}Collecting first-order terms from subsection 4.1, we obtain the following system:
\be
\sigma_*^1(y) = -\bar{\sigma}\rho_*^1 \frac{\sinh \rho_*^0  }{\cosh^{2}\rho_*^0}\cosh y .
\ee
\vspace*{-.5em}
\bea
&&\label{4.75}\left \{
\begin{array}{lr}
-\dis\frac{\partial^2 p_*^1}{\partial y^2} = \mu\dis\frac{\partial \sigma_*^0}{\partial y}\dis\frac{\partial p_*^0}{\partial y}+\mu\sigma_*^1, \\
p_*^1(\rho_*^0)=-\dis\frac{\partial p_*^0}{\partial y}(\rho_*^0)\rho_*^1,\m
\dis\frac{\partial p_*^1}{\partial y}(0)=0,
\end{array}
\right.  \label{4.76}
\eea
\be
\rho_*^1(\overline{\sigma}-\widetilde{\sigma})+\int_0^{\rho_*^0}\Big(\frac{\partial \sigma_*^0}{\partial y}(y)\frac{\partial p_*^0}{\partial y}(y)+\sigma_*^1(y)\Big)\dif y = 0,
\label{4.45@}\ee
\be
\left \{\label{4.77}
\begin{array}{lr}
\dis\frac{\partial^2 w_{n,m}^1}{\partial y^2}- (n^{2}+m^{2}+1) w_{n,m}^1 = 0,\\
 w_{n,m}^1(\rho_*^0,t) = -\dis\frac{\partial w_{n,m}^0}{\partial y}(\rho_*^0,t)\rho_*^1-\dis\frac{\partial \sigma_*^0}{\partial y}(\rho_*^0)\rho_{n,m}^1(t) \\
  \ \ \ \ \ \ \ \ \ \ \ \ \ \ \ \ \ \ \  - \dis\frac{\partial^2 \sigma_*^0}{\partial y^2}(\rho_*^0)\rho_*^1 \rho_{n,m}^0(t) -\dis\frac{\partial \sigma_*^1}{\partial y}(\rho_*^0)\rho_{n,m}^0(t),\\
\dis\frac{\partial  w_{n,m}^1}{\partial y}(0,t)=0,
\end{array}
\right.
\ee
\be
\label{4.79}\left \{
\begin{array}{lr}
\dis\frac{\partial^2 q_{n,m}^1}{\partial y^2}-(n^{2}+m^{2}) q_{n,m}^1 = -\mu\dis\frac{\partial \sigma_*^0}{\partial y}\dis\frac{\partial q_{n,m}^0}{\partial y} - \mu\dis\frac{\partial w_{n,m}^0}{\partial y}\frac{\partial p_*^0}{\partial y}+\mu\dis\frac{\partial w_{n,m}^0}{\partial t} - \mu w_{n,m}^1,\\
    q_{n,m}^1(\rho_*^0,t)=-\dis\frac{\partial q_{n,m}^0}{\partial y}(\rho_*^0,t)\rho_*^1 + \dis\frac12(n^{2}+m^{2})\rho_{n,m}^1(t) \triangleq c_1(t),\m
    \dis\frac{\partial  q_{n,m}^1}{\partial y}(0,t)=0,
\end{array}
\right.
\ee
\bea
&&\label{4.80}
\frac{\dif \rho_{n,m}^1(t)}{\dif t}=-\frac{\partial^2 p_*^0}{\partial y^2}(\rho_*^0)\rho_{n,m}^1(t)-\frac{\partial^3 p_*^0}{\partial y^3}(\rho_*^0)\rho_*^1 \rho_{n,m}^0(t)-\frac{\partial^2 p_*^1}{\partial y^2}(\rho_*^0)\rho_{n,m}^0(t)\\\nonumber
&& \ \ \ \ \ \ \ \ \ \ \ \ \ \ \ \ \ -\frac{\partial^2 q_{n,m}^0}{\partial y^2}(\rho_*^0,t)\rho_*^1 - \frac{\partial q_{n,m}^1}{\partial y}(\rho_*^0,t).
\eea
Now we need to compute the terms $\frac{\partial^2 p_*^1}{\partial y^2}(\rho_*^0)$ and
$\frac{\partial q_{n,m}^1}{\partial y}(\rho_*^0,t)$ in the above equation. The method for solving the
ODEs \re{4.75}--\re{4.79} is more or less the same as the equations for the zeroth-order terms in $\tau$, except that the expressions are more complex.
We shall omit the detailed computations and summarize the final results here. One can, of course, simply
verify these results by substituting into the equations.

Here are the solutions. For the zeroth order terms that are needed in the computation of first order terms:
\beaa
&& \frac{\partial \sigma_*^0}{\partial y} =  \overline{\sigma}\frac{\sinh y}{\cosh \rho_*^0}, \mm \frac{\p \s_*^0}{\p y}(\rho_*^0)=\bar\sigma\tanh \rho_*^0, \mm
 \tilde\sigma = \frac{\bar \sigma \tanh \rho_*^0}{\rho_*^0},\\
&& \frac{\partial p_*^0}{\partial y} = \mu \tilde{\sigma} y-\mu \overline{\sigma}\frac{\sinh y}{\cosh \rho_*^0}, \mm  \frac{\partial p_*^0}{\partial y}(\rho_*^0)=0, \\
&& \frac{\partial^2 p_*^0}{\partial y^2} = \mu \tilde{\sigma} -\mu \overline{\sigma}\frac{\cosh y}{\cosh \rho_*^0}, \mm \frac{\partial^2 p_*^0}{\partial y^2}(\rho^0_*) = \mu \tilde{\sigma} -\mu \overline{\sigma} = -\mu\bar\sigma\Big(1-\frac{\tanh\rho_*^0}{\rho_*^0}\Big),\\
&& \frac{\partial^3 p_*^0}{\partial y^3} =-\mu \overline{\sigma}\frac{\sinh y}{\cosh \rho_*^0} ,\mm \frac{\partial^3 p_*^0}{\partial y^3}(\rho^0_*) = -\mu \overline{\sigma}\tanh \rho_*^0,\\
&& \frac{\p w_{n,m}^0}{\p y} = -\frac{\partial  \sigma_*^0}{\partial y}(\rho_*^0)\sqrt{1+n^{2}+m^{2}}\frac{\sinh (\sqrt{1+n^{2}+m^{2}}y)}{\cosh (\sqrt{1+n^{2}+m^{2}} \rho_*^0)}\rho_{n,m}^0(t),\\
&& \frac{\p w_{n,m}^0}{\p t} =-\frac{\partial  \sigma_*^0}{\partial y}(\rho_*^0)\frac{\cosh (\sqrt{1+n^{2}+m^{2}}y)}{\cosh (\sqrt{1+n^{2}+m^{2}} \rho_*^0)}\frac{d \rho_{n,m}^0(t)  }{d t}, \\
&& \frac{\p q_{n,m}^0}{\p y}\ = \rho_{n,m}^0(t)\bigg\{\mu\displaystyle\frac{\partial  \sigma_*^0}{\partial y}(\rho_*^0)\sqrt{1+n^{2}+m^{2}} \frac{\sinh (\sqrt{1+n^{2}+m^{2}}y) }{\cosh (\sqrt{1+n^{2}+m^{2}} \rho_*^0)}\\
&& \ \ \ \ \ \ \ \ \ \ \ \ \ + \displaystyle\frac{(n^{2}+m^{2})-2\mu\displaystyle\frac{\partial  \sigma_*^0}{\partial y}(\rho_*^0)}{2\cosh (\sqrt{n^{2}+m^{2}}\rho_*^0)}\sqrt{n^{2}+m^{2}}\sinh (\sqrt{n^{2}+m^{2}}y) \bigg\},\\
&& \frac{\p^2 q_{n,m}^0}{\p y^2} = \rho_{n,m}^0(t)\bigg\{\mu\displaystyle\frac{\partial  \sigma_*^0}{\partial y}(\rho_*^0)(1+n^{2}+m^{2}) \frac{\cosh (\sqrt{1+n^{2}+m^{2}}y) }{\cosh (\sqrt{1+n^{2}+m^{2}} \rho_*^0)}\\
&& \ \ \ \ \ \ \ \ \ \ \ \ \ + \displaystyle\frac{(n^{2}+m^{2})-2\mu\displaystyle\frac{\partial  \sigma_*^0}{\partial y}(\rho_*^0)}{2\cosh (\sqrt{n^{2}+m^{2}}\rho_*^0)}(n^{2}+m^{2})\cosh (\sqrt{n^{2}+m^{2}}y) \bigg\}.\\
\eeaa
 Now we proceed to compute the first order.
{   For $\rho_*^1$, we substitute various expressions into \re{4.45@} }and evaluate the integral to obtain:
\be \label{rho1}
\rho_*^1 =
\mu\bar{\sigma}\;\frac{- (\rho_*^0)^{2} - \rho_*^0\sinh\rho_*^0\cosh\rho_*^0+2 \sinh^{2}\rho_*^0}{2(1-\tanh \rho_*^0/\rho_*^0- \tanh^{2} \rho_*^0)  \rho_*^0  \cosh^{2}\rho_*^0      },
\ee
where the denominator is negative, by \re{2.1}.
For $p^1_*$, we only need
$ \frac{\partial^2 p_*^1}{\partial y^2}(\rho_*^0)$, {  which can be derived from \re{4.75}:}
\be
\frac{\partial^2 p_*^1}{\partial y^2} (\rho_*^0)  =  -\mu\dis\frac{\partial \sigma_*^0}{\partial y}(\rho_*^0)\frac{\partial p_*^0}{\partial y}(\rho_*^0)-\mu\sigma_*^1(\rho_*^0)
 =  -\mu\sigma_*^1(\rho_*^0) =
 \mu\overline{\sigma}\rho_*^1\tanh \rho_*^0.
 \ee

The computation of $\frac{\partial q_{n,m}^1}{\partial y}(\rho_*^0,t)$ in \re{4.80} is much more involved, but fortunately the right-hand side, the integrals and the ODEs can all be evaluated explicitly. From \re{4.77},
\beaa
&&
w_{n,m}^1 =\bigg[ -\dis\frac{\partial w_{n,m}^0}{\partial y}(\rho_*^0,t)\rho_*^1-\dis\frac{\partial \sigma_*^0}{\partial y}(\rho_*^0)\rho_{n,m}^1(t)  - \dis\frac{\partial^2 \sigma_*^0}{\partial y^2}(\rho_*^0)\rho_*^1 \rho_{n,m}^0(t)\\
&& \ \ \ \ \ \ \ \ \ \ \ \ \ -\dis\frac{\partial \sigma_*^1}{\partial y}(\rho_*^0)\rho_{n,m}^0(t)\bigg] \frac{\cosh (\sqrt{1+n^{2}+m^{2}}y)}{\cosh (\sqrt{1+n^{2}+m^{2}} \rho_*^0)}.
\eeaa
 The right-hand side of \re{4.79} is then consolidated into the form
\beaa
&  c_2(t) \sinh y \cdot \sinh(\sqrt{1+n^2+m^2}y)
 + c_3(t) \sinh y \cdot \sinh(\sqrt{n^2+m^2}y) \\
 & +  c_4(t)   y \cdot \sinh(\sqrt{1+n^2+m^2}y)
 + c_5(t) \cosh(\sqrt{1+n^2+m^2}y),
\eeaa
with
\beaa
 c_2(t) & = & -\; \frac{2\mu^2\;  \bar\s \sqrt{   1+n^2+m^2}}{\cosh \rho_*^0\cdot \cosh (\sqrt{1+n^{2}+m^{2}} \rho_*^0)}
  \; \frac{\partial  \sigma_*^0}{\partial y}(\rho_*^0) \rho_{n,m}^0(t), \\
 c_3(t) & = &  -\; \frac{\mu\;  \bar\s \sqrt{n^2+m^2}}{\cosh \rho_*^0  } \;\; \displaystyle\frac{(n^{2}+m^{2})-2\mu\displaystyle\frac{\partial  \sigma_*^0}{\partial y}(\rho_*^0)}{2\cosh (\sqrt{n^{2}+m^{2}}\rho_*^0)}  \rho_{n,m}^0(t),
 \\
  c_4(t) & = &   \frac{\mu^2\;  \tilde\s \sqrt{1+n^2+m^2}}{  \cosh (\sqrt{1+n^{2}+m^{2}} \rho_*^0)}
  \; \frac{\partial  \sigma_*^0}{\partial y}(\rho_*^0) \rho_{n,m}^0(t), \\
   c_5(t) & = & - \; \frac{\mu}{\cosh (\sqrt{1+n^{2}+m^{2}} \rho_*^0)}\bigg\{\frac{\partial  \sigma_*^0}{\partial y}(\rho_*^0) \frac{d \rho_{n,m}^0(t)  }{d t} \\
 &&   \mm -\dis\frac{\partial w_{n,m}^0}{\partial y}(\rho_*^0,t)\rho_*^1-\dis\frac{\partial \sigma_*^0}{\partial y}(\rho_*^0)\rho_{n,m}^1(t)  - \dis\frac{\partial^2 \sigma_*^0}{\partial y^2}(\rho_*^0)\rho_*^1 \rho_{n,m}^0(t)
-\frac{\partial \sigma_*^1}{\partial y}(\rho_*^0)\rho_{n,m}^0(t)\bigg\} \\
 & = & - \; \frac{\mu}{\cosh (\sqrt{1+n^{2}+m^{2}} \rho_*^0)}\bigg\{\frac{\partial  \sigma_*^0}{\partial y}(\rho_*^0) h_{n,m}(\mu,\rho_*^0) \rho_{n,m}^0(t)   \\
 &&   \mm -\dis\frac{\partial w_{n,m}^0}{\partial y}(\rho_*^0,t)\rho_*^1-\dis\frac{\partial \sigma_*^0}{\partial y}(\rho_*^0)\rho_{n,m}^1(t)  - \dis\frac{\partial^2 \sigma_*^0}{\partial y^2}(\rho_*^0)\rho_*^1 \rho_{n,m}^0(t)
-\frac{\partial \sigma_*^1}{\partial y}(\rho_*^0)\rho_{n,m}^0(t)\bigg\},
\eeaa
so that
\be
q_{n,m}^1 = \Big[c_1(t)-Q_{n,m}^1(\rho_*^0, t)\Big]\frac{\cosh (\sqrt{n^{2}+m^{2}}y)}{\cosh (\sqrt{n^{2}+m^{2}} \rho_*^0)} + Q_{n,m}^1(y,t),
\ee
where
\beaa
Q_{n,m}^1 \!\!& = \!\!&  c_2(t) \Big[ -\;\frac{ \sinh y \cdot \sinh(\sqrt{1+n^2+m^2}y)}{2(n^2+m^2) }  +
\frac{\sqrt{1+n^2+m^2}\cosh y \cdot \cosh(\sqrt{1+n^2+m^2}y)}{2(n^2+m^2) } \Big] \\
&& + c_3(t) \Big[ -\;\frac{ \sinh y \cdot \sinh(\sqrt{n^2+m^2}y)}{4(n^2+m^2)-1} +\frac{2\sqrt{n^2+m^2}\cosh y \cdot \cosh(\sqrt{n^2+m^2}y)}{4(n^2+m^2)-1  } \Big]   \\
& & +  c_4(t) \Big[  y \cdot \sinh(\sqrt{1+n^2+m^2}y) - 2\sqrt{1+n^2+m^2}  \cosh(\sqrt{1+n^2+m^2}y)\Big] \\
&& + c_5(t) \cosh(\sqrt{1+n^2+m^2}y),\\
c_1(t)\!\!&=\!\!& q_{n,m}^1(\rho_*^0,t).
\eeaa
It follows that
\beaa
\lefteqn{\frac{\p Q_{n,m}^1}{\p y}(\rho_*^0,t)\!\!} \\
& = \!\!&  c_2(t)   \;\frac{ \cosh \rho_*^0 \cdot \sinh(\sqrt{1+n^2+m^2}\rho_*^0)}{2}
 \\
&& + c_3(t) \Big[\frac{\sqrt{n^2+m^2}\sinh \rho_*^0\cdot \cosh(\sqrt{n^2+m^2}\rho_*^0)}{4(n^2+m^2)-1  }
  + \frac{(2n^2+2m^2-1)\cosh \rho_*^0 \cdot \sinh(\sqrt{n^2+m^2}\rho_*^0)}{4(n^2+m^2)-1 }  \Big]   \\
& & +  c_4(t) \Big[ \sqrt{1+n^2+m^2} \;\rho_*^0\cdot \cosh(\sqrt{1+n^2+m^2}\rho_*^0) - (1+2n^2+2m^2)  \sinh(\sqrt{1+n^2+m^2}\rho_*^0)\Big] \\
&& + c_5(t) \sqrt{1+n^2+m^2}\sinh(\sqrt{1+n^2+m^2}\rho_*^0).
\eeaa

Substituting the expressions of $c_j(t)$ into the equations, we find
\beaa
 \frac{\partial q_{n,m}^1}{\partial y}(\rho_*^0) & = &
 \Big[c_1(t)-Q_{n,m}^1(\rho_*^0, t)\Big]  \sqrt{n^{2}+m^{2}} \tanh (\sqrt{n^{2}+m^{2}} \rho_*^0)+   \frac{\p Q_{n,m}^1}{\p y}(\rho_*^0,t) \\
 & = & K_{n,m}(\mu,\rho_*^0, \rho_*^1)\rho_{n,m}^0(t)+\bigg\{\mu\overline{\sigma}\sqrt{1+n^{2}+m^{2}}\tanh \rho_*^0\tanh (\sqrt{1+n^{2}+m^{2}} \rho_*^0) \\
&&\mm +\bigg(\frac12(n^{2}+m^{2})-\mu\overline{\sigma}\tanh \rho_*^0\bigg)\sqrt{n^{2}+m^{2}}\tanh (\sqrt{n^{2}+m^{2}} \rho_*^0)\bigg\}\rho_{n,m}^1(t),
\eeaa
where
\[
 |K_{n,m}(\mu,\rho_*^0,  \rho_*^1)|\le C(n^2+m^2+1)^{5/2}.
\]

Substituting these expressions into \re{4.80}, we derive
\be \label{rho1b}
\frac{\dif \rho_{n,m}^1(t)}{\dif t}= h_{n,m}^1( \mu, \rho_*^0, \rho_*^1,  n, m) \rho_{n,m}^1(t) + k_{n,m}^1( \mu, \rho_*^0, \rho_*^1,  n, m) \rho_{n,m}^0(t),
\ee
where
\beaa
h_{n,m}^1( \mu, \rho_*^0, \rho_*^1,  n, m) & =&-\frac{\partial^2 p_*^0}{\partial y^2}(\rho_*^0)-\mu\overline{\sigma}\sqrt{1+n^{2}+m^{2}}\tanh \rho_*^0\tanh (\sqrt{1+n^{2}+m^{2}} \rho_*^0)\\
&& \ -\bigg(\frac12(n^{2}+m^{2}) -\mu\overline{\sigma}\tanh \rho_*^0\bigg)\sqrt{n^{2}+m^{2}}\tanh (\sqrt{n^{2}+m^{2}} \rho_*^0), \\
  |k_{n,m}^1( \mu, \rho_*^0, \rho_*^1,  n, m)|& \le & C (n^2+m^2+1)^{5/2}.
\eeaa

After a careful comparison with \re{4.50}, we find
\begin{equation}\label{rho1a}
 h_{n,m}^1( \mu, \rho_*^0, \rho_*^1,  n, m)= h_{n,m}(\mu,\rho_*^0).
\end{equation}
 This is significant, since the first order term in $\tau$ does not change
the leading coefficient.
If we expend the ODE for $\rho_{n,m}(t)$ in $\tau$, then
\begin{equation}
    \begin{split}\label{4.85}
    \frac{\dif \rho_{n,m}(t)}{\dif t}
    &=\frac{\dif \rho_{n,m}^0(t)}{\dif t}+\tau\frac{\dif \rho_{n,m}^1(t)}{\dif t}+O(\tau^2)\\
    &=h_{n,m}(\mu,\rho_*^0)\rho_{n,m}^0(t)+\tau h_{n,m}(\mu,\rho_*^0)\rho_{n,m}^1(t)+\tau  k_{n,m}^1 \rho_{n,m}^0(t)+O(\tau^{2})\\
    &=h_{n,m}(\mu,\rho_*^0)\rho_{n,m}(t)+\tau  k_{n,m}^1 \rho_{n,m}^0(t)+O(\tau^{2}).
    \end{split}
\end{equation}

Thus the leading order stability up to the order of $O(\tau)$ also depends on
the sign of $h_{n,m}(\mu,\rho_*^0)$. It is not difficult to see that
\be \label{nm}
 \lim_{n^2+m^2 \to \infty} \frac{h_{n,m}(\mu,\rho_*^0)}{(n^2+m^2)^{3/2}} =-\;\frac12.
\ee
We now start to investigate the sign of $h_{n,m}(\mu,\rho_*^0)$. For convenience we write (c.f., \re{4.50}),
\be \label{h_nm}
h_{n,m}(\mu,\rho_*^0) = \mu \bar\s k_1(n^2+m^2, \rho_*^0) - k_2(n^2+m^2, \rho_*^0),
\ee
where
\beaa
 k_1(j, \rho_*^0) & = & 1-\frac{\tanh \rho_*^0}{\rho_*^0}
 -\tanh \rho_*^0\cdot \Big[\sqrt{1+j}\tanh(\sqrt{1+j}\rho_*^0 )-
 \sqrt{j}\tanh(\sqrt{j}\rho_*^0)\Big] ,\\
  k_2(j, \rho_*^0) & = & \frac12 j^{3/2} \tanh(\sqrt{j}\rho_*^0).
\eeaa
From \re{2.8},  we deduce that $k_1(j,\rho_*^0)$ is monotonically increasing in $j$, and by \re{2.1}, we have
$$
k_1(0, \rho_*^0)=1- \frac{\tanh \rho_*^0}{\rho_*^0}- \tanh^{2}\rho_*^0<0, \mm
\lim_{j\rightarrow+\infty} k_1(j, \rho_*^0)=1-\frac{\tanh \rho_*^0}{\rho_*^0}>0.
$$
Hence there exists a unique $j_{0}=j_0(\rho_*^0) >0$ (not necessarily an integer) such that  $k_1(j_0,\rho_*^0)=0$. It follows that
\[
k_1(j,\rho_*^0)<0 \m\text{ for }  0\le j < j_{0}, \mm
k_1(j,\rho_*^0)>0 \m\text{ for }   j > j_{0}.
\]
Define
\begin{equation}\label{4.52}
   \mu_{j}(\rho_*^0)=\frac{  k_2({j},\rho_*^0)}{ \bar\s \; k_1({j},\rho_*^0)} \hspace{2em}\text{ for } ~ {j}> j_0.
\end{equation}
With the structure of $h_{n,m}$ given by \re{h_nm}, we established:

\begin{lem}\label{LEM4.1}
The following assertions (i)--(ii) hold:

(i) if  $0\le n^{2}+m^{2}
\le j_{0}$, then $h_{n,m}(\mu, \rho_*^0)<0$ for any $\mu>0$;

(ii) if  $j=n^{2}+m^{2}
>j_{0}$, then

\hspace{2em}(1)   $h_{n,m}(\mu, \rho_*^0)<0$ for  $0<\mu<\mu_{j}(\rho_*^0) $;

\hspace{2em}(2)  $h_{n,m}(\mu, \rho_*^0)=0$  for  $\mu=\mu_{j}(\rho_*^0)$;

\hspace{2em}(3) $h_{n,m}(\mu, \rho_*^0)>0$ for $\mu>\mu_{j}(\rho_*^0) $.
\end{lem}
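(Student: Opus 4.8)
The plan is to read off all three assertions directly from the affine-in-$\mu$ structure \re{h_nm}, together with the sign and monotonicity information on $k_1$ and $k_2$ that has already been assembled just before the statement. Write $j=n^2+m^2$ throughout, so that
\[
 h_{n,m}(\mu,\rho_*^0)=\mu\bar\sigma\,k_1(j,\rho_*^0)-k_2(j,\rho_*^0),
\]
and recall the two facts established above: first, $k_2(j,\rho_*^0)=\tfrac12 j^{3/2}\tanh(\sqrt{j}\rho_*^0)\ge 0$, with strict inequality whenever $j>0$ since $\tanh$ is positive on $(0,\infty)$; second, by \re{2.8} the map $j\mapsto k_1(j,\rho_*^0)$ is strictly increasing, with $k_1(0,\rho_*^0)<0$ by \re{2.1} and $k_1(j,\rho_*^0)\to 1-\tanh\rho_*^0/\rho_*^0>0$ as $j\to\infty$, so that $k_1(\cdot,\rho_*^0)$ has a unique zero $j_0>0$, is negative on $[0,j_0)$ and positive on $(j_0,\infty)$.

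For part (i), suppose $0\le j\le j_0$. Then $k_1(j,\rho_*^0)\le 0$, so the term $\mu\bar\sigma\,k_1(j,\rho_*^0)$ is $\le 0$ for every $\mu>0$, while $-k_2(j,\rho_*^0)\le 0$. To upgrade this to a strict inequality I would split into the two cases that actually occur for integer $j$: if $j=0$ (i.e.\ $n=m=0$) then $k_2(0,\rho_*^0)=0$ but $k_1(0,\rho_*^0)<0$, hence $h_{0,0}(\mu,\rho_*^0)=\mu\bar\sigma\,k_1(0,\rho_*^0)<0$; if $0<j\le j_0$ then $k_2(j,\rho_*^0)>0$, hence $h_{n,m}(\mu,\rho_*^0)=\mu\bar\sigma\,k_1(j,\rho_*^0)-k_2(j,\rho_*^0)<0$. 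This exhausts the range and gives (i).

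For part (ii), suppose $j>j_0$, so $k_1(j,\rho_*^0)>0$. Then $\mu\mapsto h_{n,m}(\mu,\rho_*^0)$ is a strictly increasing affine function of $\mu$ (slope $\bar\sigma\,k_1(j,\rho_*^0)>0$) taking the negative value $-k_2(j,\rho_*^0)$ at $\mu=0$; its unique zero is exactly
\[
 \mu=\frac{k_2(j,\rho_*^0)}{\bar\sigma\,k_1(j,\rho_*^0)}=\mu_j(\rho_*^0)
\]
by \re{4.52}. Monotonicity then yields $h_{n,m}(\mu,\rho_*^0)<0$ for $0<\mu<\mu_j(\rho_*^0)$, $h_{n,m}(\mu,\rho_*^0)=0$ for $\mu=\mu_j(\rho_*^0)$, and $h_{n,m}(\mu,\rho_*^0)>0$ for $\mu>\mu_j(\rho_*^0)$, which is (1)--(3).

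As for difficulty, I do not expect a genuine obstacle here: all the analytic work (monotonicity of $k_1$ via \re{2.8}, the sign of $k_1(0,\rho_*^0)$ via \re{2.1}, and positivity of $\tanh$) is already in hand, and the lemma is essentially the bookkeeping that packages it into a stability dichotomy. The only point needing a moment's care is the degenerate case $j=0$ in (i), where $k_2$ vanishes and one must instead invoke the strict negativity of $k_1(0,\rho_*^0)$; once that is noted, everything else is immediate.
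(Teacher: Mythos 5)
Your proposal is correct and is essentially the argument the paper relies on: the paper states the lemma as an immediate consequence of the decomposition $h_{n,m}(\mu,\rho_*^0)=\mu\bar\sigma k_1(n^2+m^2,\rho_*^0)-k_2(n^2+m^2,\rho_*^0)$ together with the monotonicity and sign analysis of $k_1$ and the positivity of $k_2$, exactly as you do. Your explicit treatment of the degenerate mode $j=0$ (where $k_2$ vanishes and strict negativity comes from $k_1(0,\rho_*^0)<0$) is a small point the paper leaves implicit, but it changes nothing in the approach.
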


For convenience we define
\be
\mu_{j}(\rho_*^0) = +\infty \m\text{for } 0\le j\le j_0, \mm
\mu_*(\rho_*^0) = \min_{j=n^2+m^2 > j_0}\mu_{j}(\rho_*^0).
\ee
We now proceed to find $ \mu_*(\rho_*^0) $.

In the following figures, we let $\bar\s=1$ and have plotted $\mu_j(\rho_*^0)$ for several  $\rho_*^0$ in the range from $\rho_*^0=0.25$ to $2$.

\renewcommand\arraystretch{1.5}
\begin{table}[ht] \label{tab1}
\begin{tabular}{c|c|c|c|c} \hlinewd{1pt}
$\rho_*^0$ & $j_0(\rho_*^0)$ & Minimum at   $j  $ & $\mu_{*}(\rho_*^0)$  &
admissibility \\ \hlinewd{1pt}
 0.25 & $47<j_0(\rho_*^0)<48 $ & $j=81$ & $\mu_{*}(\rho_*^0)=\mu_{81}(\rho_*^0)\approx 62088$
 & $81= 9^2+0^2$ \\  \hline
 0.5 & $11<j_0(\rho_*^0)<12 $ &$j=20$  & $\mu_{*}(\rho_*^0)=\mu_{20}(\rho_*^0) \approx 2088.3$
 & $20= 4^2+2^2$ \\  \hline
 $1$ &  $2<j_0(\rho_*^0)<3 $ & $j=5$   & $\mu_{*}(\rho_*^0)=\mu_5(\rho_*^0) \approx 84.054$  &  $5= 2^2+1^2$ \\  \hline
 $2$ &  $0.6<j_0(\rho_*^0)<0.7 $ & $j=1$ & $\mu_{*}(\rho_*^0)=\mu_1(\rho_*^0)\approx 5.1560$  & $1= 1^2+0^2$ \\   \hlinewd{1pt}
\end{tabular} \vspace*{1ex}
\caption{Finding $\mu_*(\rho_*^0)$}
\end{table}

Take $\rho_*^0=1$, for example, then $2<j_0(\rho_*^0)<3$, and we have plotted $\mu_j(\rho_*^0)$ for $j\ge 3${   (see Figure~3).  }The function $\mu_j(\rho_*^0)$
at non-integer values of $j$ are not plotted since they are not needed. The minimum is reached at $j=5$ and $\mu_5(1)\approx 84.054$. Since $5=2^2+1^2$, it is admissible.
Other values of $\rho_*^0$ are listed in the Table 1 and plotted in Figures 1 -- 2, 4.

 \begin{rem}
 Numerical evidence shows that $\mu_*(\rho_*^0) = \mu_1(\rho_*^0)$ when $\rho_*^0>\bar\rho\approx 1.8471$.
 \end{rem}

\begin{figure}[H] \label{fig-a}
\begin{minipage}{.48\textwidth}\centering
\includegraphics[width=2.5in]{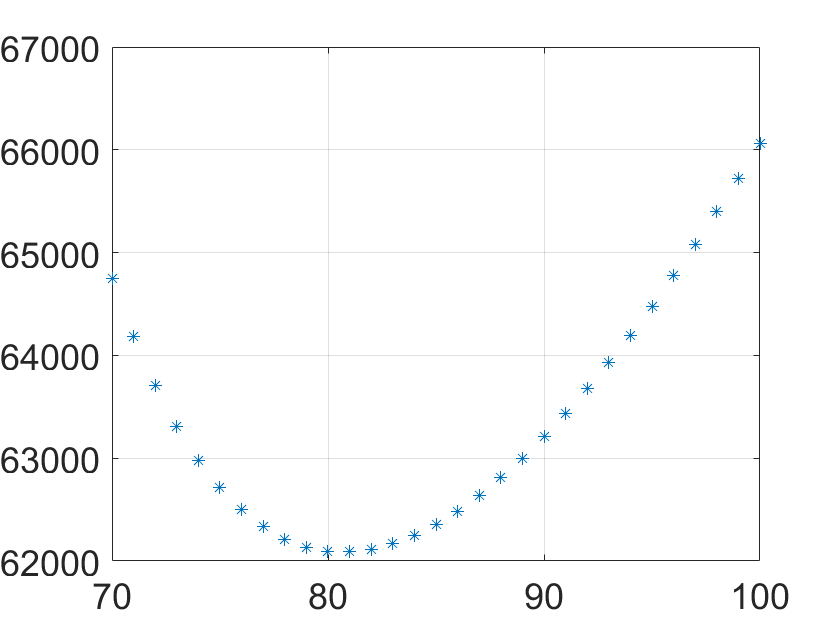}
\caption{$\mu_j(\rho_*^0)$ with $\rho_*^0=0.25$}
\end{minipage}
\begin{minipage}{.48\textwidth}\centering
\includegraphics[width=2.5in]{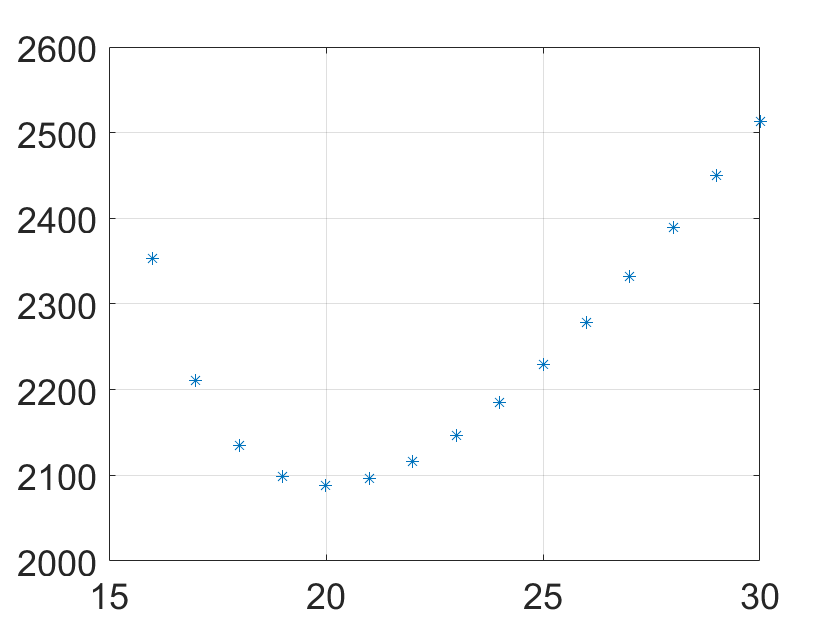}
\caption{$\mu_j(\rho_*^0)$ with $\rho_*^0=0.5$}
\end{minipage}
\end{figure}

\begin{figure}[H] \label{fig-b}
\begin{minipage}{.48\textwidth}\centering
\includegraphics[width=2.5in]{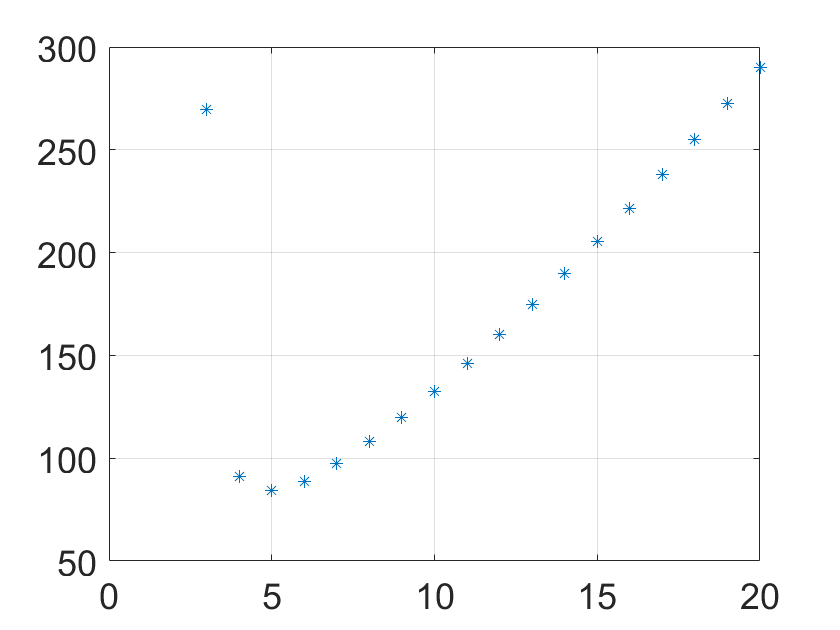}
\caption{$\mu_j(\rho_*^0)$ with $\rho_*^0=1$}
\end{minipage}
\begin{minipage}{.48\textwidth}\centering
\includegraphics[width=2.5in]{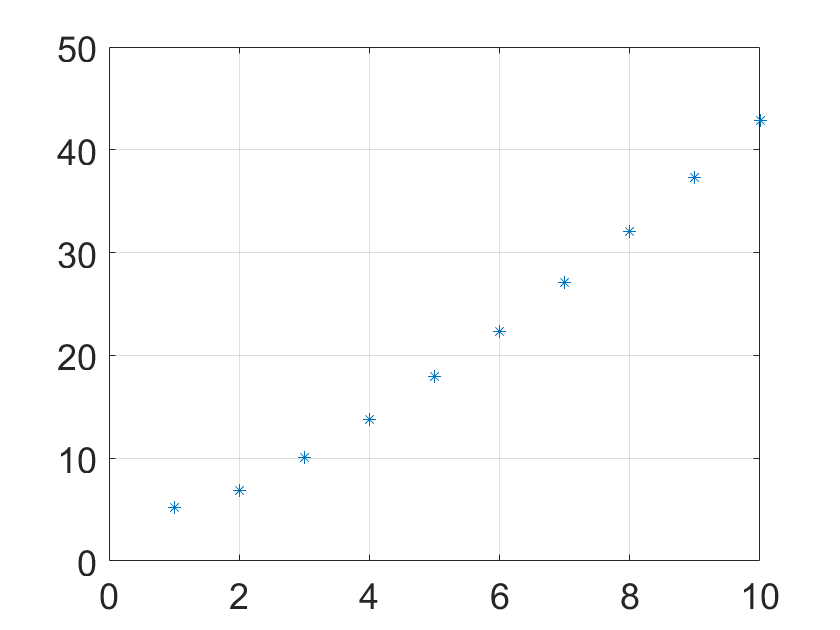}
\caption{$\mu_j(\rho_*^0)$ with $\rho_*^0=2$}
\end{minipage}
\end{figure}

\begin{lem}
If $\mu<\mu_*(\rho_*^0)$, then for sufficiently small $\tau$, there exists $\d>0$
such that
\be \label{hnm-neg}
 h_{n,m}(\mu,\rho_*^0) \le -\delta (n^2+m^2+1)^{3/2} \m\text{for all }
 n=0,1,2,\cdots, \; m = 0, 1, 2, \cdots.
\ee
\end{lem}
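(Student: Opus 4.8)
The plan is to bootstrap from Lemma~\ref{LEM4.1}, which under the hypothesis $\mu<\mu_*(\rho_*^0)$ already supplies the strict sign $h_{n,m}(\mu,\rho_*^0)<0$ for every pair $(n,m)$, to the quantitative decay \re{hnm-neg}, using the large-frequency asymptotics \re{nm}. Note first that $h_{n,m}(\mu,\rho_*^0)$, as displayed in \re{h_nm}, depends only on $\mu,\rho_*^0,n,m$ and not on $\tau$; the phrase ``for sufficiently small $\tau$'' is present only because $\rho_*^0=\rho_S$ and the whole expansion framework live in that regime, and because the lemma will later be combined with the $O(\tau)$ terms in \re{4.85}.

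\emph{Uniform sign.} Put $j=n^2+m^2$, so $h_{n,m}(\mu,\rho_*^0)=\mu\bar\sigma\,k_1(j,\rho_*^0)-k_2(j,\rho_*^0)$ by \re{h_nm}. For $0\le j\le j_0$ the sign is settled by Lemma~\ref{LEM4.1}(i). For $j>j_0$ I would first record that $\mu_j(\rho_*^0)=k_2(j,\rho_*^0)/(\bar\sigma k_1(j,\rho_*^0))\to+\infty$ as $j\to\infty$, because $k_1(j,\rho_*^0)\to 1-\tanh\rho_*^0/\rho_*^0>0$ (by \re{2.1}) while $k_2(j,\rho_*^0)\sim\tfrac12 j^{3/2}$; combined with $k_1(j,\rho_*^0)>0$ on $j>j_0$, this shows that $\mu_*(\rho_*^0)=\min_{j=n^2+m^2>j_0}\mu_j(\rho_*^0)$ is attained at a finite frequency and is strictly positive. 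Hence $\mu<\mu_*(\rho_*^0)\le\mu_j(\rho_*^0)$ for every admissible $j>j_0$, and Lemma~\ref{LEM4.1}(ii)(1) gives $h_{n,m}(\mu,\rho_*^0)<0$. So $h_{n,m}(\mu,\rho_*^0)<0$ for all $n,m\ge 0$.

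\emph{Tail and low modes.} By \re{nm}, $h_{n,m}(\mu,\rho_*^0)/(n^2+m^2+1)^{3/2}\to-\tfrac12$ as $n^2+m^2\to\infty$, so there is $J_0>0$ such that $h_{n,m}(\mu,\rho_*^0)\le-\tfrac14(n^2+m^2+1)^{3/2}$ whenever $n^2+m^2\ge J_0$. The index set $\{(n,m):n,m\ge 0,\ n^2+m^2<J_0\}$ is finite, and on it each number $-h_{n,m}(\mu,\rho_*^0)/(n^2+m^2+1)^{3/2}$ is strictly positive by the previous paragraph; let $\delta_0>0$ be their minimum. Then \re{hnm-neg} holds with $\delta=\min\{\tfrac14,\delta_0\}$, and since $\delta$ may be shrunk freely it is in particular valid for all small $\tau$.

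The argument is soft, so there is no genuine analytic obstacle; the one point I would check with care is the claim that the minimum defining $\mu_*(\rho_*^0)$ is actually attained at a finite mode and is strictly positive --- otherwise the strict inequality $\mu<\mu_*(\rho_*^0)$ might fail to dominate $\mu_j(\rho_*^0)$ at the borderline admissible frequencies just above $j_0$, where $k_1(j,\rho_*^0)$ is small and positive. This is exactly where the monotonicity of $k_1(\cdot,\rho_*^0)$ (from \re{2.8}) and the cubic growth of $k_2(\cdot,\rho_*^0)$, both recorded above, are used.
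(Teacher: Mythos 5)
Your proof is correct and follows essentially the same route as the paper: the tail $n^2+m^2$ large is handled by the asymptotics \re{nm}, and the finitely many remaining modes by the strict negativity $h_{n,m}(\mu,\rho_*^0)<0$ coming from Lemma \ref{LEM4.1} and the definition of $\mu_*(\rho_*^0)$, after shrinking $\delta$. The extra verification that $\mu_*(\rho_*^0)$ is positive (and attained), and the remark that $h_{n,m}$ itself is $\tau$-independent, are sound additions but not a different method.
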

\begin{proof}
By \re{nm}, the estimate \re{hnm-neg} is certainly valid for $n^2+m^2>N$ for some $N$ sufficiently large. For $0\le n^2+ m^2\le N$, this estimate follows from the definition of $\mu_*(\rho_*^0)$ if $\delta$ is taken to be small enough.
\end{proof}

{\noindent \bf Proof of Theorem \ref{T1.2}. }
From the above lemma and \re{rho} we find that, for $\mu<\mu_*$,
\be
 |\rho_{n,m}^0(t)| =  |\rho_{n,m}^0(0)|  e^{h_{n,m}(\mu,\rho_*^0) t} \le
 |\rho_{n,m}^0(0)| e^{-\d (n^2+m^2+1)^{3/2}t}.
\ee
 Similarly, \re{rho1b} and \re{rho1a} imply
\be
|\rho_{n,m}^1(t)|  \le
  e^{-\d(n^2+m^2+1)^{3/2} t} \Big[  |\rho_{n,m}^1(0)| +  C|\rho_{n,m}^0(0)| (n^2+m^2+1)^{5/2}t\Big]  .
\ee
Taking Fourier series in $(n,m)$ we find that $|\rho^0(t) +\tau \rho^1(t)| \le Ce^{-\d t}$
for  $t>0 $, i.e., the system linearized both in perturbation and $\tau$ is stable.

For $\mu>\mu_*(\rho_*^0)$, the mode corresponding to the mode $(n,m)$ which gives the
minimum of $\mu_j$'s is clearly unstable.
\qed

\section{Impact of time delay}
In this section, we shall  show  the impact of time delay $\tau$ on the  tumor growth.

\begin{thm}\label{thmthmthm4.1}
$\rho_*^1 > 0$, and $\rho_*^1$ is monotonically increasing in $\mu$.
\end{thm}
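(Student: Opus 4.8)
The plan is to work directly from the closed form \re{rho1}, which expresses $\rho_*^1$ as $\mu\bar\sigma$ times a ratio of two hyperbolic expressions in $\rho_*^0$. By \re{2.1} the denominator $2(1-\tanh\rho_*^0/\rho_*^0-\tanh^2\rho_*^0)\rho_*^0\cosh^2\rho_*^0$ is strictly negative, so the claim $\rho_*^1>0$ is equivalent to the numerator being strictly negative, i.e. to the elementary inequality
\[
g(r):= r^2 + r\sinh r\cosh r - 2\sinh^2 r > 0 \qquad\text{for all } r>0,
\]
applied at $r=\rho_*^0>0$ (note $\rho_*^0=\rho_S>0$ since $\bar\sigma>\tilde\sigma$, as used in the proof of Theorem \ref{T1.1}).

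First I would establish this hyperbolic inequality. Writing $r\sinh r\cosh r=\tfrac r2\sinh 2r$ and $2\sinh^2 r=\cosh 2r-1$ and expanding in power series, one finds
\[
r\sinh r\cosh r - 2\sinh^2 r \;=\; \sum_{k\ge 1}\frac{2^{2k-2}(2k-4)}{(2k)!}\,r^{2k} \;=\; -\,r^2 + \sum_{k\ge 2}\frac{2^{2k-1}(k-2)}{(2k)!}\,r^{2k},
\]
so that the $r^2$ terms cancel and $g(r)=\sum_{k\ge 3}\frac{2^{2k-1}(k-2)}{(2k)!}\,r^{2k}$, a sum of strictly positive terms for $r>0$. (Alternatively, a derivative tower works: $g(0)=g'(0)=g''(0)=g'''(0)=0$ while $g^{(4)}(r)=8r\sinh 2r>0$ for $r>0$, and integrating up four times gives $g>0$.) Since both the numerator and denominator in \re{rho1} are then negative, we conclude $\rho_*^1>0$.

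For the monotonicity in $\mu$, the key observation is that $\rho_*^0$ does not depend on $\mu$: by \re{4.34'} (equivalently, by the definition of $\rho_S$ in the proof of Theorem \ref{T1.1}), $\rho_*^0$ is the unique root of $\tanh\rho_*^0/\rho_*^0=\tilde\sigma/\bar\sigma$, a relation not involving $\mu$. Hence in \re{rho1} everything except the explicit prefactor $\mu$ is a fixed constant depending only on $\bar\sigma,\tilde\sigma$, and we have just shown that this constant is positive; therefore $\rho_*^1$ equals a fixed positive multiple of $\mu$, which is strictly increasing in $\mu$. (One could instead differentiate the defining relation \re{4.45@} in $\mu$, but invoking the closed form is far shorter.)

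The only genuine obstacle is the hyperbolic inequality $g(r)>0$; every other step is immediate once \re{rho1} is available. The power-series route is the cleanest, but one must be careful with the indices so that the $r^2$ contributions cancel exactly and leave a manifestly nonnegative (indeed, for $k\ge 3$ strictly positive) tail.
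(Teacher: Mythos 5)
Your proposal is correct and follows essentially the same route as the paper: both read off from \re{rho1} that the denominator is negative by \re{2.1}, reduce the claim to the numerator $-(\rho_*^0)^2-\rho_*^0\sinh\rho_*^0\cosh\rho_*^0+2\sinh^2\rho_*^0$ being negative, and then get monotonicity in $\mu$ for free since $\rho_*^0$ is determined by \re{4.34'} independently of $\mu$, so $\rho_*^1$ is a positive constant times $\mu$. The only difference is how the elementary hyperbolic inequality is checked — your power-series cancellation (or fourth-derivative tower $g^{(4)}(r)=8r\sinh 2r>0$) versus the paper's computation $f''(\rho)=4\sinh\rho(\sinh\rho-\rho\cosh\rho)<0$ followed by two integrations from $f(0)=f'(0)=0$ — and both verifications are valid.
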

\begin{proof}
By \re{rho1},
\begin{gather}
   \rho_*^1=\mu\bar{\sigma} \frac{- (\rho_*^0)^{2} - \rho_*^0\sinh\rho_*^0\cosh\rho_*^0+2 \sinh^{2}\rho_*^0}{2(1-\tanh(\rho_*^0)/\rho_*^0-\tanh^{2} \rho_*^0)  \rho_*^0  \cosh^{2}\rho_*^0      }.
\end{gather}
The denominator in the above expression is negative, by \re{2.1}. If we show that the numerator is also negative, then clearly
 $\rho_*^1 > 0$ and $\rho_*^1$ is monotonically increasing in $\mu$.

Clearly,
$
(\sinh\rho-\rho\cosh\rho)'=-\rho\sinh\rho<0$ and
$\sinh0-0\cosh0=0.$
Hence
\begin{equation}\nonumber
 f''(\rho)\triangleq  [- (\rho)^{2} - \rho\sinh\rho\cosh\rho+2 \sinh^{2}\rho]'' =4\sinh\rho(\sinh\rho-\rho\cosh\rho)<0.
\end{equation}
Since we clearly have $f'(0)=f(0)=0$, we must have $f(\rho)<0$ for $\rho>0$.
The proof is complete.
\end{proof}

\section{Conclusion}
In this paper we have investigated the impact of time delay on a tumor model in a flat domain. The existence, uniqueness, stability of the stationary problem are studied. In addition, here are some interesting observations on the impact of time delay $\tau$, the tumor aggressiveness constant $\mu$,
the nutrient supply $\bar\s$, and the tumor size (order $0$ is $\rho_*^0$, order $1$ is $\rho_*^0+\tau\rho_*^1$).

(1) Adding the time delay (at $O(\tau)$) to the system would not alter the threshold value
$\mu_*$ for which the stability of the stationary solution changes (section 4).

(2) Some other properties remain the same as in the case without time delay (see table 1):  the bigger the size (measured by $\rho_*^0$) of the tumor, the smaller the value of $\mu_*$, that is  to say that smaller stationary tumor is much more likely to be stable than its larger counter part.  When the thickness is reduced from $0.5$ to $0.25$, the tumor aggressiveness would have to increase more than 30 fold to cause problems (Table 1).  In other word, a small sized tumor is less likely to proliferate than a large sized tumor. Implication: treat the small tumor before it grows larger. Since a long time is needed for it to grow (stable), there is plenty of time to treat it.

(3) Flat perturbation (mode $(0,0)$) is always stable, regardless of the value of $\mu$.

(4) As we increase $\mu$ across the threshold of stability, the instability comes from the modes
$(0,1)$ or $(1,0)$ for larger tumors (larger $\rho_*^0)$. It can come for other modes for smaller tumors, e.g., for $\rho_*^0 = 1$, it comes from the modes $(2,1)$ or $(1,2)$ (see table 1).

(5) Adding the time delay would result in a larger stationary tumor (at $O(\tau)$) when compared to the same system without delay (Theorem 5.1). The bigger the tumor proliferation intensity $\mu$ is, the greater impact that time delay has on the size of the stationary tumor (Theorem 5.1).

(6) Since the nutrient supply $\bar\s$ appears together with $\mu$ as a product in the definition
of the threshold, increase the
nutrient supply would promote instability and proliferation.

\section*{Acknowledgements}
The research was supported by Natural Science Foundation of Guangdong, China (2018A030313523) and
Guangdong Basic and Applied Basic Research Foundation, China (2020A1515011148).

\bibliographystyle{plain}
\bibliography{main}
\end{CJK}
\end{document}